\newtheorem{thm}{Theorem}[section]
\newtheorem{lem}[thm]{Lemma}
\newtheorem{cor}[thm]{Corollary}
\newtheorem{prop}[thm]{Proposition}
\theoremstyle{definition}
\newtheorem*{nota}{Notations}
\newtheorem{rmq}[thm]{Remark}
\newtheorem{ex}[thm]{Examples}
\numberwithin{equation}{section}
\def\Z{\mathbb Z}
\def\F{\mathbb F}
\def\Fp{{\mathbb F}_p}
\def\F2{{\mathbb F}_2}
\def\U{\mathcal U}
\def\T{\mathrm{T}}
\def\Ap{{\mathcal A}_p}
\def\U{{\mathcal U}}
\def\H{ {H}}
\def\ra{\rightarrow}
\def\Hom{\mathrm {Hom}}
\def\End{\mathrm {End}}
\DeclareMathOperator{\Ker}{Ker}
\def\mod{\mathrm{-mod}}
\def\proj{\mathrm{-proj}}
\def\GL{\mathrm {GL}}
\def\M{\mathrm {M}}
\def\I{\mathcal I }
\def\J{\mathcal J }
\def\Gr{\mathrm{Gr} }
\def\St{\mathrm {St}}
\DeclareMathOperator{\Res}{Res}
\DeclareMathOperator{\Ind}{Ind}
\DeclareMathOperator{\Inf}{Inf}
 \title[Lannes' $\T$ vs Harish-Chandra restriction]{Lannes'  $\T$ functor on injective unstable modules and Harish-Chandra restriction}
\author[V. Franjou]{Vincent Franjou}
\address{Universit\'e de Nantes \\ 
Laboratoire de Math\'ematiques Jean Leray
(UMR 6629 CNRS \& UN)}
\email{vincent.franjou@univ-nantes.fr}
\thanks{The first author wants to thank VNU-HUS for its support and hospitality in the spring 2015 when the content of the paper was first discussed.}
\author[Nguyen D. H. Hai]{Nguyen Dang Ho Hai}
\address{Department of Mathematics, College of Sciences, University of Hue, Vietnam}
\email{nguyendanghohai@gmail.com}
\author[L. Schwartz]{Lionel Schwartz}
\address{LAGA, UMR 7539 CNRS, LIA Formath Vietnam, USPC University.}
\email{schwartz@math.univ-paris13.fr}
\thanks{
The second and third  authors thank the VIASM for his warm hospitality while this work was in progress. The third author thanks the program KH3CF of USPC University for his support. Unfortunately this project, like many others, has been interrupted by government decision, leaving USPC's researchers and teachers worrying about research policy.}
\begin{document}
	
\begin{abstract} 
In the 1980's, the magic properties of the cohomology of elementary abelian groups 
as modules over the Steenrod algebra 
initiated a long lasting interaction between topology and modular representation theory in natural characteristic.
The Adams-Gunawardena-Miller theorem in  particular, showed that their decomposition is governed by the modular representations of the semi-groups of square matrices. 
Applying Lannes' T functor on the summands $L_P:=\mathrm{Hom}_{\M_n(\Fp)}(P,H^*(\Fp)^n)$ defines an intriguing construction in representation theory. We show that $\T (L_P) \cong L_P \oplus H^*V_1 \otimes L_{\delta (P)}$, defining a functor $\delta$ from $\mathbb F_p[\M_n(\mathbb F_p)]$-projectives to $\mathbb F_p[\M_{n-1}(\mathbb F_p)]$-projectives. We relate this new functor $\delta$ to classical constructions in the representation theory of the general linear groups.
\end{abstract}

\date{\today}
\maketitle

%
\section{Introduction}
\subsection{Setting}
For a prime $p$ and a positive integer $n$, let $\GL_n\mod$ denote the category of right 
modules over the group ring $\Fp[\GL_n(\Fp)]$. 
If we let $V_n$ be the $n$-dimensional $\Fp$-vector space $(\Fp)^n$ with the left action of $\GL_n(\Fp)$, its cohomology with mod $p$ coefficients $\H^*V_n$
is an example of such a module.
Given a projective object $P$ in $\GL_n\mod$, put  
\[
M_P:=\Hom_{\GL_n\mod }(P,\H^*V_n) 
.\]
The cohomology $\H^*V_n$ is also a module over the mod $p$ Steenrod algebra $\Ap$, and a peculiar one: 
it is an injective object  in $\U$, the category of unstable modules over $\Ap$ (we refer to the exposition in \cite{Sch94}). 
By naturality, 
the two actions on $\H^*V_n$ commute, 
making $M_P$ an object in $\U$.
Being a direct summand in an $\U$-injective, 
$M_P$ is also an injective object in the category $\U$.

Similarly, let $\M_n\mod$ denote the category of right modules over the semigroup ring $\Fp[\M_n(\Fp)]$ of $n\times n$ matrices over the field $\Fp$. 
Given a projective object $P$ in $\M_n\mod$, put  
\[
L_P:=\Hom_{\M_n\mod }(P,H^*V_n)
.\]
Again, 
$L_P$ is an injective object in the category $\U$. 
By the Adams-Gunawardena-Miller theorem, $\End_\U(\H^*V_n)\cong\Fp[\M_n(\Fp)]$, 
and standard arguments lead to the following statement.
\begin{thm} \cite{HK88} \label{HK88}
The correspondence 
$$
P \mapsto L_P=\Hom_{\mathrm \M_n\mod}(P,\H^*V_n)
$$
defines a contravariant equivalence 
from the category $\M_n\proj$ of projective $\Fp[\mathrm \M_n(\Fp)]$-modules, 
to the category $\mathcal I \U_n^{red}$ of injective unstable modules which are direct sums
of indecomposable factors of $H^*V_n$,
with inverse functor $\Hom_\U(-,H^*V_n)$. 
\end{thm}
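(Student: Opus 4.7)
\emph{Approach.} The plan is a Morita-style duality argument whose main input is the Adams--Gunawardena--Miller isomorphism $\End_\U(H^*V_n) \cong \Fp[\M_n(\Fp)]$. This identification makes $H^*V_n$ into an $(\Ap, \Fp[\M_n(\Fp)])$-bimodule whose two actions commute, so that both
\[ F := \Hom_{\M_n\mod}(-, H^*V_n) \quad \text{and} \quad G := \Hom_\U(-, H^*V_n) \]
are well-defined contravariant functors between $\M_n\mod$ and $\U$. The goal is to show that they restrict to mutually quasi-inverse equivalences between $\M_n\proj$ and $\mathcal{I}\U_n^{red}$.

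\emph{Base case and evaluation maps.} I would first settle the free rank-one case. Evaluation at $1$ yields a $\U$-isomorphism
\[ L_{\Fp[\M_n(\Fp)]} = \Hom_{\M_n\mod}(\Fp[\M_n(\Fp)], H^*V_n) \xrightarrow{\sim} H^*V_n, \]
while AGM itself supplies the dual isomorphism $\Hom_\U(H^*V_n, H^*V_n) \cong \Fp[\M_n(\Fp)]$ in $\M_n\mod$. I would then introduce the natural evaluation transformations
\[ \eta_P : P \to G(F(P)), \quad p \mapsto \bigl(f \mapsto f(p)\bigr), \]
\[ \varepsilon_L : L \to F(G(L)), \quad l \mapsto \bigl(\varphi \mapsto \varphi(l)\bigr), \]
and check that they are natural and respect the relevant actions---a formality, given the commuting bimodule structure. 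By construction, $\eta_{\Fp[\M_n(\Fp)]}$ and $\varepsilon_{H^*V_n}$ reduce to the AGM isomorphism (and its transpose), hence are isomorphisms.

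\emph{Extension by additivity.} To pass from the generator case to all objects, I would use that $F$ and $G$ are additive and carry direct summands to direct summands. Every object of $\M_n\proj$ is a direct summand of a free $\Fp[\M_n(\Fp)]$-module, and symmetrically every object of $\mathcal{I}\U_n^{red}$ is, by definition, a summand of a coproduct of copies of $H^*V_n$. A routine diagram chase then propagates the isomorphisms $\eta$ and $\varepsilon$ from the generators to all summands and finite direct sums.

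\emph{Main obstacle.} The substantive content of the theorem is AGM itself, which is being cited; what remains is Morita-type formalism. The only delicate point is the behaviour of the contravariant $\Hom$ functors on possibly infinite coproducts, since each of $F$ and $G$ sends $\bigoplus$ to $\prod$. This is manageable because $H^*V_n$ has only finitely many isomorphism classes of indecomposable unstable summands, so every object on either side is specified by a multiplicity function on a fixed finite list of indecomposables, and the verification reduces componentwise to the indecomposable case already settled in the base step.
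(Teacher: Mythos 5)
The paper itself offers no proof of this statement: it invokes the Adams--Gunawardena--Miller identification $\End_\U(\H^*V_n)\cong\Fp[\M_n(\Fp)]$, says that ``standard arguments'' yield the theorem, and cites \cite{HK88}. Your Morita-style scheme --- the two contravariant $\Hom$ functors, the evaluation transformations $\eta$ and $\varepsilon$, their identification with the AGM isomorphism on the generators $\Fp[\M_n(\Fp)]$ and $\H^*V_n$, and propagation to retracts and finite direct sums by additivity and naturality --- is exactly that standard argument, and it is correct in the finitely generated setting. (You should still add one word on why $F(P)$ lands in $\mathcal I\U_n^{red}$, i.e.\ why a direct summand of a finite sum of copies of $\H^*V_n$ is again a direct sum of indecomposable factors of $\H^*V_n$: this is a Krull--Schmidt argument, available because $\End_\U((\H^*V_n)^k)$ is a finite-dimensional $\Fp$-algebra by AGM.)

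The genuine defect is your last paragraph. The reduction of the infinite case to ``multiplicity functions on a fixed finite list of indecomposables'' does not work, precisely because of the phenomenon you yourself point out: $F$ and $G$ convert infinite coproducts into products. Concretely, for $P=\bigoplus_{\mathbb N}\Fp[\M_n(\Fp)]$ one has $F(P)\cong\prod_{\mathbb N}\H^*V_n$, whose degree-zero part is $\prod_{\mathbb N}\Fp$; composing the projection onto this degree-zero part with any linear functional and with the unit $\Fp\subset \H^*V_n$ produces uncountably many independent elements of $G(F(P))$, so $G(F(P))$ is strictly larger than the countable-dimensional $P$ and $\eta_P$ cannot be an isomorphism --- indeed no isomorphism $GF\cong\mathrm{Id}$ can exist on such objects. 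No componentwise bookkeeping repairs this, since an infinite product of indecomposables is not the direct sum with the same multiplicities (already the degreewise dimensions differ). The statement must therefore be read, as in \cite{HK88} and as the rest of the paper actually uses it, as an equivalence between finitely generated projectives and finite direct sums of indecomposable factors of $\H^*V_n$ --- equivalently, as the bijection between primitive idempotents (indecomposable projectives) of $\Fp[\M_n(\Fp)]$ and indecomposable summands of $\H^*V_n$, extended only formally to arbitrary direct sums; your argument as written does not, and cannot, deliver the literal infinite case with these two $\Hom$ functors as quasi-inverses.
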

Also, if $P$ is an indecomposable projective $\Fp[\M_n(\Fp)]$-module, 
then $L_P$ is an indecomposable injective in $\U$.
It is shown in \cite{LS89} that any reduced injective in $\U$ is a direct sum 
of such indecomposable summands
(an unstable module is called \emph{reduced} if it does not contain a non-zero suspension).
\subsection{Main result}
Lannes' main tool is his magic functor $\T: \U\to \U$,
a left adjoint to the functor given by  $M\mapsto M\otimes H$ where $H:=\H^*\Z/p$ \cite{Lan92}. 
The exactness of Lannes' T-functor reflects the $\U$-injectivity of $H$.
It comes with a reduced version $\overline{\T} $, a left adjoint to the tensor product with $\overline{H}=\overline{\H}^*\Z/p$, and the decomposition $H=\overline{H}\oplus \Fp$ induces a natural decomposition: $\T M \cong M\oplus \overline \T(M)$ 
Our main result describes the value of $T$ on reduced $\U$-injectives.
\begin{thm}\label{delta}
Let $E$ be a reduced unstable injective module. Then there exists a 
(up to $\U$-isomorphism)
uniquely defined reduced unstable injective module $\delta (E)$
such that:
\[
\overline \T (E) \cong H \otimes \delta (E)
\]
Moreover, when $E$ is an object of $\mathcal I \U_n^{red}$, then 
$\delta(E)$ is an object in $\mathcal I \U_{n-1}^{red}$.
\end{thm}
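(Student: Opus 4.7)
The plan unfolds in three stages.

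\textbf{Reduction and the model case.} By the structure theorem of \cite{LS89}, every reduced unstable injective is a direct sum of indecomposable summands of $H^*V_m$ for varying $m$; since $\overline{\T}$ is additive and $H \otimes -$ commutes with direct sums, I reduce to the case $E \in \I\U_n^{red}$ for a fixed $n$. I first treat the model $E = H^*V_n$ via Lannes' formula
\[
\T(H^*V_n) \cong H^*V_n \otimes \Fp^{V_n},
\]
where $\Fp^{V_n}$ is the degree-zero $M_n$-module of $\Fp$-valued functions on $V_n$, with diagonal action induced from the action on $V_n$. The basepoint $0 \in V_n$ yields an $M_n$-equivariant splitting $\Fp^{V_n} = \Fp \cdot 1 \oplus W$, where $W = \{f \colon f(0) = 0\}$, matching $\T = \Id \oplus \overline{\T}$; hence $\overline{\T}(H^*V_n) \cong H^*V_n \otimes W$. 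Choosing an $\Fp$-linear (not $M_n$-equivariant) splitting $V_n \cong \Fp e_1 \oplus V_{n-1}$ identifies $H^*V_n \cong H \otimes H^*V_{n-1}$ in $\U$, so that
\[
\overline{\T}(H^*V_n) \cong H \otimes (H^*V_{n-1})^{\oplus(p^n-1)},
\]
and $\delta(H^*V_n) = (H^*V_{n-1})^{\oplus(p^n-1)}$ lies in $\I\U_{n-1}^{red}$.

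\textbf{Passage to summands $L_P$.} For a general indecomposable $E = L_P$, the summand $L_P$ of $H^*V_n$ is the image of an idempotent $e \in \Fp[M_n]$ acting by $\U$-endomorphisms; since $\overline{\T}$ is a $\U$-functor it commutes with $e$, and so
\[
\overline{\T}(L_P) \cong \Hom_{M_n}(P, H^*V_n \otimes W) \cong L_{P \otimes W^*},
\]
by tensor-hom adjunction for the finite-dimensional $W$. The crucial identification is $L_{P \otimes W^*} \cong H \otimes L_{\delta(P)}$ for some projective $M_{n-1}$-module $\delta(P)$. Here one exploits that $V_n$ is a transitive $M_n$-set with ``parabolic'' stabilizer $\mathcal P_{n-1} \subset M_n$ fixing $e_1$, so that $\Fp^{V_n}$ is induced (in a suitable semigroup sense) from the trivial $\mathcal P_{n-1}$-module and $W$ is a direct summand. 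Combining this with the identity $H \otimes L_Q \cong L_{\Ind_{M_{n-1}}^{M_n} Q}$ (which follows from $H^*V_n |_{M_{n-1}} \cong H \otimes H^*V_{n-1}$ together with Frobenius reciprocity) yields an identification of $\delta$ with a Harish-Chandra-type restriction from $M_n$ to $M_{n-1}$, matching the paper's title.

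\textbf{Uniqueness and main obstacle.} Uniqueness of $\delta(E)$ up to $\U$-isomorphism follows from Theorem \ref{HK88} at level $n-1$ combined with a cancellation property of $H \otimes -$ on $\I\U_{n-1}^{red}$ (equivalently, faithfulness of $\Ind_{M_{n-1}}^{M_n}$ on $M_{n-1}$-projectives). The principal technical obstacle is the last identification: one must explicitly describe $P \otimes W^*$ as an $M_n$-module induced from $M_{n-1}$ and verify that the corresponding $M_{n-1}$-module is projective. This is precisely the semigroup analogue of Harish-Chandra restriction whose analysis occupies the remainder of the paper.
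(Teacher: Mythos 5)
Your reduction to summands of $\H^*V_n$ and your computation of the model case $\overline{\T}(\H^*V_n)$ are fine, but the argument stops exactly where the theorem begins. The entire content of the statement is that $\overline{\T}(L_P)$ is of the form $H\otimes E'$ with $E'$ reduced injective for an \emph{arbitrary} summand $L_P$, and you yourself label this ``crucial identification'' a ``principal technical obstacle'' to be treated elsewhere; nothing in the proposal establishes it. Moreover, the route you sketch is not available as stated: over the semigroup ring $\Fp[\M_n(\Fp)]$ there is no contragredient (no antipode), so the adjunction $\Hom_{\M_n}(P,W\otimes\H^*V_n)\cong\Hom_{\M_n}(P\otimes W^*,\H^*V_n)$ you invoke is meaningless there, and there is no reason for $P\otimes W^*$ to be $\Fp[\M_n(\Fp)]$-projective --- the paper explicitly points out that the contragredient argument works only in the $\GL$ case. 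Even in the $\GL$ case, the identity $H\otimes M_Q\cong M_{\Ind_{\GL_{n-1}}^{\GL_n}Q}$ does not by itself show that $\overline{\T}(M_P)$ is of that form: one still has to prove that the relevant coefficient module is induced from the smaller (semi)group with projective source, which is again the deferred claim. The uniqueness assertion likewise rests on an unproved cancellation property of $H\otimes-$ on reduced injectives, and your remark that the missing analysis ``occupies the remainder of the paper'' is inaccurate: Section \ref{HC:section} treats only the $\GL$ case and comes after the theorem has been proved by other means.

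The paper supplies precisely the missing mechanism, and none of its ingredients (or substitutes for them) appear in your proposal. First, $\J(V_n)$ is replaced by the augmentation ideal $\I(V_n)$: because $P$ is projective and the unstable modules involved are injective, $\Hom_{\M_n}(P,-\otimes\H^*V_n)$ splits every short exact sequence of coefficient modules, so only composition factors matter, and $\J(V_n)$ and $\I(V_n)$ have the same ones (Proposition \ref{main1}; the isomorphism is not natural in $P$, which is why no contragredient is needed). The gain is structural: $\I(V_n)\otimes\H^*V_n$ carries an explicit $\End(V_n)$-equivariant \emph{free} $H$-module structure, $u\cdot((\mu)\otimes x)=(\mu)\otimes\mu^*(u)x$, with
\[
\Fp\otimes_H\bigl(\I(V_n)\otimes\H^*V_n\bigr)\cong\bigoplus_{\mu\in V_n^*\setminus\{0\}}\H^*(\Ker\mu)
\]
reduced and injective (Proposition \ref{H-module:prop}). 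Hence $\overline{\T}(L_P)$ is a free unstable $H$-$\Ap$-module whose quotient $\Fp\otimes_H\overline{\T}(L_P)$ is reduced injective, and Bourguiba's theorem \cite{Bou09} then forces $\overline{\T}(L_P)\cong H\otimes\delta(L_P)$ with $\delta(L_P)=\Hom_{\End(V_n)}\bigl(P,\bigoplus_{\mu\neq0}\H^*(\Ker\mu)\bigr)$, visibly a summand of a direct sum of copies of $\H^*V_{n-1}$; the same theorem gives the uniqueness. To complete your approach you would have to either reproduce this $H$-module/Bourguiba argument or genuinely prove the semigroup induction statement you defer.
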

The second author proved the result for virtual modules by using topological methods in \cite{Hai15}. As explained there, this in turn implies the third author's conjecture about the eigenvalues of Lannes' T-functor (see also \cite{Hai16} for another proof of this conjecture).

\subsection{}
We now explain the core of the argument developed in Section \ref{H-mod:section}.

A classical computation in \cite{Lan92} gives an $\End (V)$-isomorphism of unstable modules
\[\overline{\T} \H^*(V)\cong \J (V)\otimes\H^*V ,\]
where $\J (V)$ denotes the quotient of the vector space of set maps, $\Fp^V$, by the sub-space of constant maps.
Hence a natural isomorphism, for $P$ in $\M_n\proj$:
\begin{equation}\label{mu}
\overline{\T} (L_P) \cong \Hom_{\M_n\mod}(P, \J (V_n)\otimes \H^*V_n).
\end{equation}
This is a reduced unstable module for each $P$, indeed a submodule in direct sums of copies of $\H^*V_n$, 
and it is $\mathcal Nil$-closed \cite{Sch94}.   

Instead of $\J (V)$, we may consider its Kuhn dual $$\I(V):= \J(V^*)^*,$$ 
 which is the kernel of the augmentation in the group algebra of $V^*$.
We define another right-exact functor from projective $\M_n$-modules to unstable modules by the formula
\[
\nu_n(P):= \Hom_{\M_n\mod }(P, \I (V_n)\otimes \H^*V_n)
 .\]
A key observation of this paper is the following.
\begin{prop}\label{main1} 
For any $P$ in $\M_n\proj$, 
there is an isomorphism of unstable modules: 
\begin{equation*}\label{iso}
\overline{\T} (L_P)  \cong \nu_n(P).
\end{equation*}
\end{prop}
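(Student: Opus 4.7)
The plan is to reduce Proposition~\ref{main1} by Yoneda to a single $\M_n$-equivariant isomorphism of unstable modules, and then to construct that isomorphism using the multiplicative structure of $\H^*V_n$. Both assignments $P\mapsto\overline{\T}(L_P)$ and $P\mapsto\nu_n(P)$ are contravariant right-exact additive functors from $\M_n\proj$ to $\U$. Since $\overline{\T}$ is exact and every $P\in\M_n\proj$ is a direct summand of a free $\Fp[\M_n(\Fp)]$-module, $\overline{\T}$ commutes with $\Hom_{\M_n\mod}(P,-)$; combined with the identification~\eqref{mu} this yields
\[
\overline{\T}(L_P) \cong \Hom_{\M_n\mod}(P,\,\J(V_n)\otimes \H^*V_n), \quad \nu_n(P) = \Hom_{\M_n\mod}(P,\,\I(V_n)\otimes \H^*V_n).
\]
Both sides are thus representable functors on $\M_n\proj^{op}$ with values in right $\Fp[\M_n(\Fp)]$-modules in $\U$, so by Yoneda the proposition reduces to producing an $\M_n$-equivariant isomorphism of unstable modules
\[
\J(V_n)\otimes \H^*V_n \;\cong\; \I(V_n)\otimes \H^*V_n.
\]

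To build this, I would compare the two natural short exact sequences of right $\M_n$-equivariant unstable modules,
\[
0\to \H^*V_n\to \Fp^{V_n}\otimes \H^*V_n\to \J(V_n)\otimes \H^*V_n\to 0,
\]
coming from the inclusion of constant functions into $\Fp^{V_n}$, and
\[
0\to \I(V_n)\otimes \H^*V_n\to \Fp[V_n^*]\otimes \H^*V_n\to \H^*V_n\to 0,
\]
coming from the augmentation $\Fp[V_n^*]\twoheadrightarrow \Fp$. It would then suffice to construct an $\M_n$-equivariant, $\Ap$-linear isomorphism of middle terms $\Fp^{V_n}\otimes \H^*V_n \cong \Fp[V_n^*]\otimes \H^*V_n$ that carries the first distinguished $\H^*V_n$ onto the second. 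The expected comparison uses the natural bilinear pairing $V_n\times V_n^*\to\Fp$ together with the inclusion $V_n^*\hookrightarrow \H^*V_n$ (in degree one for $p=2$, or via the Bockstein in degree two for odd $p$) to encode the pairing as a canonical $\M_n$-equivariant kernel in the larger tensor product $\Fp^{V_n}\otimes \Fp[V_n^*]\otimes \H^*V_n$.

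The main obstacle is precisely that $\Fp^{V_n}$ and $\Fp[V_n^*]$ are genuinely non-isomorphic as $\M_n$-modules in their own right: the iso exists only after tensoring with $\H^*V_n$ and uses the algebra structure of $\H^*V_n$ in an essential way. One therefore needs to verify three compatibilities of the comparison map simultaneously: $\M_n$-equivariance (automatic from naturality in $V_n$), compatibility with the Steenrod algebra action (to be checked on the generating degree-one or degree-two classes of $\H^*V_n$), and the matching of the constants summand with the image of the augmentation. This multi-faceted verification is the technical core of Section~\ref{H-mod:section}.
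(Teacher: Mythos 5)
Your ``Yoneda'' reduction replaces the statement by a strictly stronger one, and that stronger statement is false. Proposition \ref{main1} only asserts an isomorphism $\overline{\T}(L_P)\cong\nu_n(P)$ for each \emph{projective} $P$; no naturality in $P$ is claimed. A single $\M_n(\Fp)$-equivariant isomorphism of unstable modules $\J(V_n)\otimes\H^*V_n\cong\I(V_n)\otimes\H^*V_n$ would give an isomorphism $\Hom_{\M_n\mod}(P,\J(V_n)\otimes\H^*V_n)\cong\Hom_{\M_n\mod}(P,\I(V_n)\otimes\H^*V_n)$ natural in \emph{all} $P$, hence (using exactness of $\overline{\T}$ and a finite free presentation, exactly as in the derivation of (\ref{mu})) the conclusion of Proposition \ref{main1} for every finitely generated $P$ in $\M_n\mod$ --- but the remark following Lemma \ref{H-mod} states precisely that the proposition fails for non-projective $P$. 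The underlying point is that $\J(V_n)$ and $\I(V_n)$ are \emph{not} isomorphic as $\Fp[\M_n(\Fp)]$-modules (they are Kuhn duals of one another); they merely have the same composition factors. Your last paragraph acknowledges this obstacle, but the proposed remedy --- ``encoding the pairing as a canonical kernel'' in $\Fp^{V_n}\otimes\Fp[V_n^*]\otimes\H^*V_n$ --- is never carried out: no map is written down and no compatibility is checked, so the technical core is missing. Worse, both of your short exact sequences split $\M_n$-equivariantly (evaluation at $0$ splits off the constants in $\Fp^{V_n}$, and the basis vector $[0]$ splits the augmentation of $\Fp[V_n^*]$), so your middle-term comparison amounts, up to cancelling a common summand $\H^*V_n$, to the very equivariant isomorphism that cannot exist.

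The projectivity of $P$ has to enter the argument, and in the paper's proof it does, in a way that deliberately sacrifices naturality. For a short exact sequence $0\to K\to E\to Q\to 0$ of $\M_n(\Fp)$-modules, applying $\Hom_{\M_n\mod}(P,-\otimes\H^*V_n)$ with $P$ projective gives a short exact sequence of unstable modules whose terms are $\U$-injective (as unstable modules they are direct summands of finite sums of copies of $\H^*V_n$), hence the sequence splits in $\U$. Running over composition series of $\I(V_n)$ and of $\J(V_n)$, both $\Hom_{\M_n\mod}(P,\I(V_n)\otimes\H^*V_n)$ and $\Hom_{\M_n\mod}(P,\J(V_n)\otimes\H^*V_n)$ become isomorphic to $\Hom_{\M_n\mod}(P,\Gr(V_n)\otimes\H^*V_n)$, where $\Gr(V_n)$ is the common associated graded; this yields the (non-natural) isomorphism of Proposition \ref{main1}. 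In the $\GL$ case one can argue even more directly via the contragredient adjunction $\Hom_{\GL}(P,M\otimes\H^*V_n)\cong M_{P\otimes M^\#}$ and the fact that projectives with the same composition factors are isomorphic. If you rework your outline along these lines --- composition series plus splitting from $\U$-injectivity, rather than an equivariant comparison of coefficient modules --- you recover the intended proof.
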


The point is that  the right-hand side bears more structure.
Recall from \cite{LZ95} that an unstable $H$-module is an unstable module provided with an $H$-module structure, for which the Cartan formula holds.

\begin{lem}\label{H-mod} The functor $\nu_n$ takes values in the category of free unstable $H$-modules. 
\end{lem}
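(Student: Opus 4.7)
The plan is to produce a natural $\M_n$-equivariant unstable $H$-module structure directly on $\I(V_n) \otimes \H^*V_n$, and then transport it to $\nu_n(P)$ via the exact functor $\Hom_{\M_n\mod}(P,-)$. The key observation is that each character $\chi \in V_n^*$ plays two distinct roles: it yields a basis element $\chi - 1$ of the augmentation ideal $\I(V_n) \subset \Fp[V_n^*]$, and, through the identification $V_n^* \cong \H^1V_n$, it is a degree-one class in $\H^*V_n$. Combining these, I would define a degree-one endomorphism
\[
\phi \colon \I(V_n)\otimes \H^*V_n \longrightarrow \I(V_n)\otimes \H^*V_n,\qquad (\chi - 1)\otimes a \longmapsto (\chi - 1)\otimes \chi \cdot a,
\]
on the natural basis and extend linearly. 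I would interpret $\phi$ as the action of the degree-one generator of $H$ ($t$ for $p = 2$, $\alpha$ for $p$ odd), and deduce the full $H$-action (including, at odd $p$, the polynomial generator $x = \beta\alpha$) by imposing the Cartan formula.

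The formal verifications I would carry out are: that $\phi^2 = 0$ at odd $p$, which follows from $\chi^2 = 0$ in $\H^*V_n$; the Cartan formula for $\phi$ against the Steenrod algebra, which reduces to the Cartan formula inside $\H^*V_n$ since $\Sq^i$ acts trivially on the degree-zero factor $\I(V_n)$; and $\M_n$-equivariance, which is immediate because for $g \in \M_n(\Fp)$, both $(\chi - 1)$ and $\chi\cdot a$ transform consistently under $\chi \mapsto g\chi$. For the freeness of $\I(V_n)\otimes \H^*V_n$ as an $H$-module I would use the (non-equivariant) decomposition
\[
\I(V_n)\otimes \H^*V_n \;=\; \bigoplus_{\chi \in V_n^*\setminus\{0\}} \Fp \cdot (\chi - 1)\otimes \H^*V_n,
\]
on which $H$ acts on the summand indexed by $\chi$ by multiplication by $\chi$ on the second factor. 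Since a nonzero linear form $\chi$ can always be completed to a system of polynomial generators of $\H^*V_n$, the module $\H^*V_n$ is free over the subalgebra generated by $\chi$ (together with $\beta\chi$ when $p$ is odd), which is isomorphic to $H$; each summand is therefore $H$-free, and so is the total.

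The conclusion for $\nu_n(P)$ is then formal. Because the $H$-action on $\I(V_n)\otimes \H^*V_n$ commutes with the $\M_n$-action, the functor $\Hom_{\M_n\mod}(P,-)$ endows $\nu_n(P)$ with a compatible unstable $H$-module structure. Writing the projective $P$ as a direct summand of $\Fp[\M_n(\Fp)]^r$, $\nu_n(P)$ becomes a direct summand of $(\I(V_n)\otimes \H^*V_n)^r$, hence of a free $H$-module; and over the graded-connected local ring $H$, every bounded-below graded-projective module is free (lift a basis of $M / \overline{H} \cdot M$ and apply graded Nakayama), so $\nu_n(P)$ is itself free. The main technical obstacle I anticipate is the consistency check at odd primes: verifying that the action forced on $x = \beta\alpha$ by the Cartan constraint, namely the graded commutator of the Bockstein with $\phi$, really extends the $E(\alpha)$-action to a well-defined action of the full algebra $H = \Fp[x]\otimes E(\alpha)$, rather than merely of the exterior subalgebra.
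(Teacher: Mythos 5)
Your proposal is correct and takes essentially the same route as the paper: Proposition \ref{H-module:prop} defines the $H$-action on $\I(V_n)\otimes \H^*V_n$ by $u\cdot((\mu)\otimes x)=(\mu)\otimes \mu^*(u)x$, checks $\M_n$-equivariance by the same computation you sketch, and obtains freeness from the split exact sequence $0\to\Ker\mu\to V_n\to\Fp\to 0$, which is exactly your summand-by-summand argument, before passing to $\nu_n(P)$ by projectivity of $P$. The odd-prime consistency issue you flag dissolves immediately if, as in the paper, you define the action on the summand indexed by $\chi$ as restriction of scalars along the unstable-algebra map $\chi^*\colon H\to \H^*V_n$ (whose image is the subalgebra generated by $\chi$ and $\beta\chi$ that you already use for freeness), so the full $H$-action is well defined by construction and your $\phi$ and its Cartan-forced extension coincide with it.
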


As a consequence,
the unstable  module $\overline{\T} (L_P)$
is also a free unstable $H$-module.
Theorem \ref{delta} then follows from a theorem of Bourguiba \cite{Bou09} . 
%

\begin{rmq}Note that the functors $P\mapsto L_P$ and $P\mapsto \nu_n(P)$ extend to all $\M_n\mod$, and  Lemma \ref{H-mod} is still valid if we consider $\nu_n$ as a functor from $\M_n\mod$. However Proposition \ref{main1} is no longer true in general.
\end{rmq}

\subsection{Properties of $\delta$} 
The next properties follow easily from the properties of the functor $\overline \T$.
\begin{prop}\label{prop}
Let $E$ be a direct summand of $\H^*V_r$ and $F$ be a direct summand of $\H^*V_s$. Then
\[
\delta_{r+s}(E \otimes F) \cong  (E \otimes \delta_s(F)) \oplus (\delta_r(E) \otimes F) \oplus (H \otimes \delta_r(E) \otimes \delta_s(F)).
\]
Moreover there is a  functor $\omega \colon \mathcal I \U_n^{red} \ra \mathcal I \U_n^{red}$ such that $\omega (H) =H$, $\omega (\Fp)=\{0\}$ and
\[
\delta_n \circ \omega (E) \cong  \delta_n(E) \oplus \omega \circ \delta_n (E). 
\]
\end{prop}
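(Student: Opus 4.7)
The plan is to deduce both statements from the reformulation of Theorem \ref{delta} as $\T(E) \cong E \oplus H \otimes \delta(E)$ for $E$ a reduced injective, together with two standard inputs: the monoidality of Lannes' T-functor, $\T(M \otimes N) \cong \T(M) \otimes \T(N)$, and the cancellation of free $H$-module factors (Bourguiba's theorem) that underwrites uniqueness in Theorem \ref{delta}.

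First, for the tensor product formula, I would apply $\T$ to $E \otimes F$ and use monoidality:
\[\T(E \otimes F) \cong \T(E) \otimes \T(F) \cong (E \oplus H \otimes \delta_r E) \otimes (F \oplus H \otimes \delta_s F).\]
Distributing and isolating the terms other than $E \otimes F$, and then comparing with the direct expansion $\T(E \otimes F) \cong E \otimes F \oplus H \otimes \delta_{r+s}(E \otimes F)$, yields
\[H \otimes \delta_{r+s}(E \otimes F) \cong H \otimes \bigl( E \otimes \delta_s F \oplus \delta_r E \otimes F \oplus H \otimes \delta_r E \otimes \delta_s F \bigr).\]
Cancellation of the free $H$-module factor then gives the first claim.

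Next, for the functor $\omega$, I would construct $\omega$ explicitly on $\mathcal I \U_n^{red}$ so that the base values $\omega(H) = H$ and $\omega(\Fp) = 0$ hold. On $\mathcal I \U_1^{red}$, additivity then forces $\omega(\overline H) = H$, which is consistent with the desired identity since $\omega$ is zero on the base category $\mathcal I \U_0^{red}$ of $\Fp$-vector spaces. A uniform construction can be sought by translating through the Adams--Gunawardena--Miller equivalence to an endofunctor on $\M_n$-projectives, guided by the Harish-Chandra-type interpretation of $\delta$ suggested by the title of the paper. Once $\omega$ is in hand, the identity $\delta_n \circ \omega(E) \cong \delta_n(E) \oplus \omega \circ \delta_n(E)$ would be verified by applying $\overline{\T}$ to $\omega(E)$, using the tensor formula just established along with the defining properties of $\omega$, and again cancelling the free $H$-module factor.

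The main obstacle will be producing such a clean uniform description of $\omega$ and verifying the Weyl-type commutation relation $[\delta, \omega] = 1$ (in the appropriate Grothendieck-group sense) in a functorial way. The values on the basic summands $H$ and $\Fp$ only pin $\omega$ down on $\mathcal I \U_1^{red}$; for $n \geq 2$, the combinatorial structure of the indecomposable summands of $\H^* V_n$ and their relation to the modular representations of $\M_n(\Fp)$ becomes intricate, so a conceptual---rather than case-by-case---construction of $\omega$ is needed to make the proof transparent.
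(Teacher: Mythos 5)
For the tensor product formula your argument is essentially the paper's: the authors dispose of this part with ``apply the definitions'', meaning exactly the computation you give, namely the exponential property of $\T$ together with the natural splitting $\T M\cong M\oplus\overline{\T} M$, and then the uniqueness in Theorem \ref{delta} (equivalently Bourguiba's theorem) to remove the factor $H$. Your bookkeeping is right: the cross term $ (H\otimes\delta_r E)\otimes(H\otimes\delta_s F)$ contributes $H\otimes(H\otimes\delta_r E\otimes\delta_s F)$ after factoring out one copy of $H$, and the module inside the parentheses on the right-hand side is a direct summand of a finite sum of copies of $\H^*V_{r+s-1}$, hence a reduced injective, so the uniqueness statement does apply. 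This half of the proposal is fine.

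The second half has a genuine gap: the existence of $\omega$ is the actual content of the claim, and you do not construct it. Your proposal only records the forced values on $\mathcal I\U_1^{red}$ and then says a uniform construction ``can be sought'' through the Adams--Gunawardena--Miller equivalence or a Harish-Chandra-type interpretation; that is a restatement of the problem, not a proof, and the verification of $\delta_n\circ\omega\cong\delta_n\oplus\omega\circ\delta_n$ cannot even be started without knowing what $\omega$ is (the tensor formula from the first part gives no handle on it). The paper's answer is different from what you guess: $\omega$ is induced by the functor $\Omega$ on unstable modules, the left adjoint to the suspension, not by a representation-theoretic construction through $\M_n\proj$. One then has to check that $\Omega$ (suitably corrected) preserves $\mathcal I\U_n^{red}$, that it takes the stated values on $H$ and $\Fp$, and that the commutation rule with $\delta_n$ follows from the interplay of $\Omega$, suspension and $\overline{\T}$; the paper itself defers these details ``elsewhere'', but identifying $\Omega$ as the source of $\omega$ is the key idea your proposal is missing. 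As written, your text proves the first displayed isomorphism but not the second statement.
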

We now show that $\delta$ is compatible with the filtration by the dimension $n$.
\begin{thm} \label{functorEnd}The  exact functor
$\Fp \otimes_H \nu_n(-)$  induces 
an exact functor 
\[
\delta_n : \ \M_n\proj \ra \M_{n-1}\proj 
,\]
such that, for each $P$ in $\M_n\proj$, there is an isomorphism of unstable modules:
\[
\overline{\T} (L_P) \cong H\otimes L_{\delta_n(P)}
.\]
\end{thm}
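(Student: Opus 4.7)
The plan is to realise $\delta_n$ as the composite
\[
\M_n\proj \xrightarrow{\;\nu_n\;} \{\text{free unstable } H\text{-modules}\} \xrightarrow{\;\Fp\otimes_H-\;} \mathcal I \U_{n-1}^{red} \xrightarrow{\;\sim\;} \M_{n-1}\proj,
\]
the last arrow being the inverse of the contravariant Hai-Kuhn equivalence of Theorem \ref{HK88}. The entire content of the theorem then reduces to verifying that the intermediate object $\Fp\otimes_H\nu_n(P)$ really belongs to $\mathcal I \U_{n-1}^{red}$, and to reading off the resulting formula.

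Fix $P\in\M_n\proj$. Proposition \ref{main1} identifies $\overline{\T}(L_P)\cong\nu_n(P)$, and Lemma \ref{H-mod} tells us this is free as an unstable $H$-module. On the other hand, Theorem \ref{delta}, applied to the reduced injective $E=L_P\in\mathcal I \U_n^{red}$, produces $\delta(L_P)\in\mathcal I \U_{n-1}^{red}$ with $\overline{\T}(L_P)\cong H\otimes\delta(L_P)$. Comparing the two descriptions yields an isomorphism $\nu_n(P)\cong H\otimes\delta(L_P)$ of free unstable $H$-modules, and tensoring down by $\Fp$ over $H$ gives
\[
\Fp\otimes_H\nu_n(P)\;\cong\;\delta(L_P)\;\in\;\mathcal I \U_{n-1}^{red}.
\]
The inverse Hai-Kuhn equivalence now provides, uniquely up to isomorphism, a projective $\delta_n(P)\in\M_{n-1}\proj$ with $L_{\delta_n(P)}\cong\Fp\otimes_H\nu_n(P)$, and substituting into the previous chain yields the claimed isomorphism $\overline{\T}(L_P)\cong H\otimes L_{\delta_n(P)}$.

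Functoriality of $\delta_n$ is automatic from the naturality of its three building blocks: the morphism maps of $\nu_n$ are $H$-linear because the $H$-structure on $\nu_n(P)$ is inherited from the target module in the defining Hom, $\Fp\otimes_H-$ is covariant and additive, and Hai-Kuhn is a (contravariant) equivalence, so the composite is a well-defined covariant functor. Exactness is free, since every short exact sequence in $\M_n\proj$ is split and any additive functor preserves split exactness. The only genuine difficulty is the ``moreover'' part of Theorem \ref{delta} --- the refinement that $\delta$ sends $\mathcal I \U_n^{red}$ into the strictly smaller category $\mathcal I \U_{n-1}^{red}$ --- which is what allows one to apply Hai-Kuhn at rank $n-1$; without it, the target category $\M_{n-1}\proj$ of $\delta_n$ could not even be reached, and one would be left only with a functor landing in general reduced unstable injectives.
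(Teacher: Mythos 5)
Your global architecture --- compute $\Fp\otimes_H\nu_n(P)$, show it lies in $\mathcal I \U_{n-1}^{red}$, then apply the inverse of the equivalence of Theorem \ref{HK88} (Harris--Kuhn, by the way, not ``Hai--Kuhn'') --- is exactly the paper's, and your remarks on functoriality (two contravariant functors compose to a covariant one) and on exactness (split exactness in $\M_n\proj$) are fine. The pivotal step, however, is not justified. From Proposition \ref{main1} and Theorem \ref{delta} you only obtain an isomorphism of \emph{unstable modules} $\nu_n(P)\cong\overline{\T}(L_P)\cong H\otimes\delta(L_P)$; nothing in those statements makes it $H$-linear for the $H$-structure of Lemma \ref{H-mod} on $\nu_n(P)$ and the standard $H$-structure on $H\otimes\delta(L_P)$. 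You cannot ``tensor down by $\Fp$ over $H$'' along a map that is not $H$-linear: the quotient $\Fp\otimes_H M$ depends on the chosen $H$-module structure, and for a mere $\U$-isomorphism between two free $H$-modules there is no a priori reason the two quotients are isomorphic --- controlling exactly this is the content of Bourguiba's uniqueness statement, and to invoke it you would first need to know that $\Fp\otimes_H\nu_n(P)$ is reduced and injective, which you never establish. So the claimed isomorphism $\Fp\otimes_H\nu_n(P)\cong\delta(L_P)$, and with it the membership of $\Fp\otimes_H\nu_n(P)$ in $\mathcal I \U_{n-1}^{red}$, is left unproved; this is the heart of the theorem, not a formality.

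The missing ingredient is the explicit computation of the quotient, which is what the paper supplies in Proposition \ref{H-module:prop} and its Corollary: the free $H$-structure on $\I(V_n)\otimes\H^*V_n$ is $\M_n(\Fp)$-equivariant and $\Fp\otimes_H(\I(V_n)\otimes\H^*V_n)\cong\bigoplus_{\mu\neq 0}\H^*(\Ker\mu)$, whence $\Fp\otimes_H\nu_n(P)\cong\Hom_{\M_n\mod}\big(P,\bigoplus_{\mu\neq 0}\H^*(\Ker\mu)\big)$, a direct summand of a finite direct sum of copies of $\H^*V_{n-1}$. This places it in $\mathcal I \U_{n-1}^{red}$ directly, shows it is reduced injective (so Bourguiba's uniqueness yields $\overline{\T}(L_P)\cong H\otimes\big(\Fp\otimes_H\nu_n(P)\big)$), and then your final step via Theorem \ref{HK88} at rank $n-1$ goes through verbatim. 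Note also that this computation is precisely how the paper proves the ``moreover'' clause of Theorem \ref{delta} that you lean on; your appeal to that clause is not circular, but it outsources the very content the proof is expected to provide, and it cannot substitute for the $H$-linear identification of the quotient discussed above. (Alternatively, one can repair your route by observing that Bourguiba's isomorphism may be taken $H$-linear --- the inclusion of the $H$-free module into its injective hull in $H$-$\U$ is $H$-linear and is forced to be onto by a Poincar\'e series count --- but this still requires knowing that the quotient is reduced injective, i.e.\ the same computation.)
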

A similar result holds for projective $\Fp[\GL_n(\Fp)]$-modules. 
\begin{thm} \label{functorGL}
The  exact functor 
\[
P\mapsto \Fp \otimes_H  \Hom_{\GL_n\mod }(P, \I (V_n)\otimes \H^*V_n)
\]
from $\GL_n\proj$ takes values in $\U_{n-1}^{red}$. 
It induces an exact functor from $\GL_{n}\proj$ to $\GL_{n-1}\proj $, 
which we denote again by $\delta_n$,
such that for each $P$ in $\GL_n\proj$, there is an isomorphism of unstable modules:
\[
\overline{\T} (M_P) \cong H\otimes M_{\delta_n(P)}
.\]
\end{thm}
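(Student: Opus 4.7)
The proof follows the same template as Theorem \ref{functorEnd}, with $\M_n$ replaced by $\GL_n$ throughout. The first step is to establish the $\GL_n$-analogue of Proposition \ref{main1}:
\[
\overline{\T}(M_P) \cong \Hom_{\GL_n\mod}(P, \I(V_n) \otimes \H^*V_n)
\]
for $P \in \GL_n\proj$. The same formal argument applies, since Lannes' identification $\overline{\T}\H^*V_n \cong \J(V_n) \otimes \H^*V_n$ is equivariant for the full endomorphism semigroup $\End(V_n)$ (hence a fortiori for $\GL_n$), and Kuhn duality $\J(V_n) \cong \I(V_n)^*$ together with the finite generation of $P$ over $\Fp[\GL_n]$ allow us to pass from $\J(V_n)$ to $\I(V_n)$.

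Next I would verify that the right-hand side is a free unstable $H$-module. On the free generator $P = \Fp[\GL_n]$ this reduces to Lemma \ref{H-mod}: the intrinsic $H$-module structure on $\I(V_n) \otimes \H^*V_n$ invoked there does not depend on whether we view the target as an $\M_n$- or $\GL_n$-module. A general projective $P \in \GL_n\proj$ is a retract of a finite direct sum of free generators, so $\overline{\T}(M_P)$ is a retract of a free unstable $H$-module, hence itself free by the Bourguiba-type arguments of Section \ref{H-mod:section}. Applying \cite{Bou09} as in the proof of Theorem \ref{delta} yields
\[
\overline{\T}(M_P) \cong H \otimes F(P), \qquad F(P) := \Fp \otimes_H \overline{\T}(M_P),
\]
an exact functor in $P$. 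Since $M_P$ is a direct summand of some $(\H^*V_n)^{\oplus k}$ it lies in $\mathcal I \U_n^{red}$, and Theorem \ref{delta} then forces $F(P) \in \mathcal I \U_{n-1}^{red}$, settling the first assertion.

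The genuinely new content is the identification of $F(P)$ with $M_Q$ for some $Q \in \GL_{n-1}\proj$. Choose a splitting $V_n = V_{n-1} \oplus V_1$; write $P_{n-1} \subset \GL_n$ for the parabolic stabilizing $V_{n-1}$, and $\GL_{n-1}$ for its Levi quotient. The intrinsic $H$-action underlying Lemma \ref{H-mod} is equivariant for this Levi, and decomposing $\I(V_n)$ along $V_n^* = V_{n-1}^* \oplus V_1^*$ identifies $\Fp \otimes_H (\I(V_n) \otimes \H^*V_n)$ with a finite sum of copies of $\H^*V_{n-1}$ carrying an explicit $\GL_{n-1}$-module structure. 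Consequently, for $P \in \GL_n\proj$, $F(P)$ is naturally a $\GL_{n-1}$-equivariant summand of $(\H^*V_{n-1})^{\oplus \ell}$, and we set $\delta_n(P) := \Hom_\U(F(P), \H^*V_{n-1})$ with its induced $\GL_{n-1}$-structure to obtain the required projective, which is the Harish-Chandra restriction of $P$ alluded to in the title. The principal obstacle is the equivariance at this last step: the $H$-action a priori depends on the choice of splitting, and one must verify that, together with the Levi action, it descends to a structure intrinsic to $\GL_{n-1}$ (and not merely to a Borel). Exactness and the tensor formula then follow formally from the $\M_n$-theory already in place.
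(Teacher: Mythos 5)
The first half of your outline (the $\GL_n$-version of Proposition \ref{main1}, the $\GL_n$-equivariant free $H$-module structure, and Bourguiba's theorem giving $\overline{\T}(M_P)\cong H\otimes F(P)$ with $F(P)=\Fp\otimes_H\overline{\T}(M_P)$ a summand of a sum of copies of $\H^*V_{n-1}$) agrees with the paper. The gap is exactly at what you call the genuinely new content. Setting $\delta_n(P):=\Hom_\U(F(P),\H^*V_{n-1})$ does not yield the theorem: the duality of Theorem \ref{HK88} is an equivalence between $\M_{n-1}\proj$ and $\mathcal I\U_{n-1}^{red}$, not between $\GL_{n-1}\proj$ and $\mathcal I\U_{n-1}^{red}$. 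The functor $Q\mapsto M_Q$ on $\GL_{n-1}\proj$ is not essentially surjective onto $\mathcal I\U_{n-1}^{red}$ (for instance $\Fp$ is a direct factor of $\H^*V_{n-1}$, yet every nonzero $M_Q$ is infinite-dimensional), so one cannot simply ``invert'' via $\Hom_\U(-,\H^*V_{n-1})$ as in the $\M_{n-1}$ setting; moreover $\Hom_\U(F(P),\H^*V_{n-1})$ with its restricted $\GL_{n-1}$-action is in general neither projective-with-the-right-property nor such that applying $M_{-}$ recovers $F(P)$. You also misplace the difficulty: the $H$-action of Proposition \ref{H-module:prop} is canonical and $\End(V_n)$-equivariant, so no choice of splitting enters; the real issue is the $\GL_{n-1}$-module structure of $\Fp\otimes_H(\I(V_n)\otimes\H^*V_n)\cong\bigoplus_{\mu\neq 0}\H^*(\Ker\mu)$, which is \emph{not} a sum of copies of $\H^*V_{n-1}$ with the standard action, since $\GL_{n-1}$ permutes the summands. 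Nothing in your argument exhibits a $\GL_{n-1}$-projective $Q$ with $F(P)\cong M_Q$.

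The paper closes this gap with Proposition \ref{induction:prop}: a $\GL_n$-equivariant isomorphism $\bigoplus_{\mu\neq 0}\H^*(\Ker\mu)\cong \Ind_{\GL_{n-1}U}^{\GL_n}\Inf_{\GL_{n-1}}^{\GL_{n-1}U}\H^*V_{n-1}$, proved using the transitive action of $\GL_n$ on nonzero linear forms together with a dimension count. Frobenius reciprocity then gives $F(P)\cong\Hom_{\GL_{n-1}}\bigl((\Res_{\GL_{n-1}U}^{\GL_n}P)_U,\H^*V_{n-1}\bigr)=M_Q$ with $Q=(\Res_{\GL_{n-1}U}^{\GL_n}P)_U$, and $Q$ is projective because it is obtained by applying the left adjoint of the exact functor $\Ind\circ\Inf$ to a projective; this $Q$ is precisely the Harish-Chandra restriction you allude to, but in your writeup it is asserted rather than proved. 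Without this induced-module identification (or an equivalent), neither the projectivity of $\delta_n(P)$ over $\GL_{n-1}$ nor the isomorphism $\overline{\T}(M_P)\cong H\otimes M_{\delta_n(P)}$ is established.
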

%
%
\begin{ex}
Let $\St_n$ denote the Steinberg representation of $\GL_n(\Fp)$ \cite{Ste56}. 
For $\rho$ in $\GL_n\mod$
we denote by $P_\rho$ its projective cover in $\GL_n\proj$.
For these examples, we let $p=2$.
\begin{enumerate}
\item $\delta_3 (P_{\F2})= 2P_{\F2}\oplus 2 \St_2$,
\item $\delta_3 (P_{V_3})= P_{\F2}\oplus  \St_2$,
\item $\delta_3 (P_{V_3^\#})= P_{\F2}\oplus  \St_2$,
\item $\delta_n(\St_n)=\St_{n-1}$, $n\geq 1$.
\end{enumerate}
\end{ex}
For $p=2$, an isomorphism $\overline \T M_n\cong H\otimes M_{n-1}$, was first obtained by John Harris and James Shank by combining their work \cite{Harris-Shank-92} with work of Dave Carlisle and Nick Kuhn \cite{CK96}.
Here $M_n\cong L_n\oplus L_{n-1}$ is the Steinberg unstable module of Stephen Mitchell and Stewart Priddy \cite{MP83}, which is the same as $M_{\St_n}\cong L_{\St_n}\oplus L_{\St_{n-1}}$ in our notation
(indeed we kept the letters $M$ and $L$).
Their result also writes in our notations:
\[\delta (L_{\St_n})=L_{\St_{n-1}}\ ,\ \ \ \delta (M_{\St_n})=M_{\St_{n-1}}\ \ \textrm{or} \ \ \delta_n({\St_n})=\St_{n-1}
\,. \]
We generalize this early result. 
The following theorem gives a formula for the action of Lannes' $\T$-functor on the unstable module $M_{\St_n\otimes X}$.
Note that for every $X$ in $\GL_n\mod$, because $\St_n$ is a projective in $\GL_n\mod$, the tensor product $\St_n\otimes X$ is also projective in $\GL_n\mod$. 
\begin{thm}\label{main2}For $X\in \GL_n\mod$, there is an isomorphism in $\GL_{n-1}\proj$: 
\[
\delta_n(\St_n\otimes X) \cong \St_{n-1}\otimes \Res_{\GL_{n-1}}^{\GL_n}X
\  .\]
\end{thm}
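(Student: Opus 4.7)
My plan is to lift the desired isomorphism in $\GL_{n-1}\proj$ to an isomorphism of unstable modules, and then verify the latter via Frobenius reciprocity. By Theorem \ref{functorGL} and the $\GL_n$-version of Theorem \ref{HK88}, proving $\delta_n(\St_n \otimes X) \cong \St_{n-1} \otimes \Res_{\GL_{n-1}}^{\GL_n} X$ amounts to exhibiting an isomorphism of unstable modules
\[ \overline{\T}(M_{\St_n \otimes X}) \cong H \otimes M_{\St_{n-1} \otimes \Res_{\GL_{n-1}}^{\GL_n} X}. \]
The decomposition $V_n \cong V_{n-1} \oplus \Fp e_n$ yields $H^*V_n \cong H \otimes H^*V_{n-1}$ as $\GL_{n-1}$-modules with $\GL_{n-1}$ acting trivially on $H$, so the right-hand side identifies with $\Hom_{\GL_{n-1}}(\St_{n-1} \otimes \Res X, H^*V_n)$, writing $\Res X = \Res_{\GL_{n-1}}^{\GL_n} X$. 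For the left-hand side, since $\overline{\T}$ is exact and commutes with the $\GL_n$-action, it passes inside the Hom, and Lannes' formula $\overline{\T}(H^*V_n) \cong \J(V_n) \otimes H^*V_n$ then gives $\overline{\T}(M_{\St_n \otimes X}) \cong \Hom_{\GL_n}(\St_n \otimes X, \J(V_n) \otimes H^*V_n)$.

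Next, I would identify $\J(V_n) = \Fp^{V_n}/\Fp$ with the permutation representation $\Fp[V_n \setminus \{0\}]$ by representing each class by its unique function vanishing at $0$. With $e_n$ as base point this writes $\J(V_n) \cong \Ind_P^{\GL_n} \Fp$, where $P = \mathrm{Stab}_{\GL_n}(e_n) = \GL_{n-1} \ltimes U$ is the maximal parabolic with abelian unipotent radical $U$ and Levi containing the chosen copy of $\GL_{n-1}$. Combining the projection formula with Frobenius reciprocity gives
\[ \Hom_{\GL_n}(\St_n \otimes X, \J(V_n) \otimes H^*V_n) \cong \Hom_{P}\!\left(\Res^{\GL_n}_P \St_n \otimes \Res^{\GL_n}_P X, H^*V_n\right). \]

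The heart of the argument is then the identification $\Res^{\GL_n}_P \St_n \cong \Ind_{\GL_{n-1}}^P \St_{n-1}$ of $\Fp[P]$-modules. Both sides are projective $\Fp[P]$-modules of the common dimension $p^{n(n-1)/2}$: the restriction because $\Fp[\GL_n]$ is free over $\Fp[P]$ and $\St_n$ is $\Fp[\GL_n]$-projective, and the induction as a left adjoint to exact restriction. Since $U$ is a normal $p$-subgroup of $P$, every simple $\Fp[P]$-module is the inflation of a simple $\Fp[\GL_{n-1}]$-module, so it suffices to match the simple tops. For the induced module the top is manifestly the inflation of $\St_{n-1}$; for the restricted module it is the inflation of the Jacquet module $(\St_n)_U$, so the identification reduces to the classical Harish-Chandra computation $(\St_n)_U \cong \St_{n-1}$. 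Granting this, a further application of the projection formula and Frobenius reciprocity transforms the Hom over $P$ into $\Hom_{\GL_{n-1}}(\St_{n-1} \otimes \Res X, H^*V_n)$, matching the target. The main obstacle is the Jacquet-module computation for the Steinberg, which I would either cite or establish via the explicit formula for the Steinberg idempotent; alternatively, one could specialize to the known case $X = \Fp$ (the Harris--Shank and Carlisle--Kuhn result recalled just before the theorem) and extend by a naturality argument using the exactness of both functors $\delta_n(\St_n \otimes -)$ and $\St_{n-1} \otimes \Res(-)$ on $\GL_n\mod$.
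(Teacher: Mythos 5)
Your argument is correct in outline, and it takes a genuinely different route from the paper's direct proof. The paper's key input is Lemma \ref{key}, the global identity $\Ind_{\GL_{n-1}}^{\GL_n}(\St_{n-1})\cong \J(V_n)^\#\otimes\St_n$, proved by comparing Brauer characters (Brunat's formula for $\St_n$ on elements conjugate into $\GL_{n-1}$, the value of $\J(V_n)^\#$, and a counting argument); the theorem then follows by a chain of adjunction isomorphisms very close to yours. You instead identify $\J(V_n)$ with the permutation module $\Fp[V_n\setminus\{0\}]\cong\Ind_P^{\GL_n}\Fp$ and localize at $P=\mathrm{Stab}(e_n)=\GL_{n-1}\ltimes U$, reducing everything to $\Res^{\GL_n}_P\St_n\cong\Ind_{\GL_{n-1}}^P\St_{n-1}$, which you get by the projective-cover/top argument once you know $(\St_n)_U\cong\St_{n-1}$. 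That local statement implies the paper's Lemma \ref{key} (induce up to $\GL_n$ and apply the projection formula), so the two lemmas are essentially equivalent; what differs is the proof technique. Your route is module-theoretic and avoids Brauer characters altogether, at the price of importing the classical Harish-Chandra/Jacquet computation for the Steinberg module — which is in fact the same input that powers the paper's other proof of the theorem via \cite[Corollary 7.4]{DM91} (and is exactly Theorem \ref{HC:thm} for $P=\St_n$), and is a fair citation, comparable to the paper's reliance on Brunat's appendix.

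Two small caveats. First, $\mathrm{Stab}_{\GL_n}(e_n)$ is the mirabolic subgroup, of index $p-1$ in the maximal parabolic stabilizing the line $\langle e_n\rangle$ (they coincide only for $p=2$); nothing in your argument uses parabolicity, only the semidirect product structure with normal $p$-subgroup $U$ and quotient $\GL_{n-1}$, so this is a labelling slip, not a gap. Second, your fallback ("check $X=\Fp$ and extend by exactness/naturality") is not sufficient as stated: agreement of two exact functors at the single module $\Fp$ does not propagate to all of $\GL_n\mod$ — you would need a natural comparison map, or agreement on free modules together with compatibility with presentations. Since your main argument stands on its own, this alternative should simply be dropped.
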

\begin{rmq}
A theorem of George Lusztig \cite{Lusztig76} (also due to John W. Ballard) states that a projective object of $\GL_n\mod$ can always be written as $\St_n\otimes X$ where $X$ is a virtual object in $\GL_n\mod$.
One can deduce from this result and from Theorem \ref{main2} that, for $P$ a projective object in $\GL_n\mod$, $\overline \T(M_{P})$ is of the form $H\otimes M'$ where $M'$ is a \emph{formal} sum of direct summands of $\H^*V_{n-1}$.
This provides yet another proof of Theorem \ref{functorGL} for virtual projectives.
\end{rmq}
The above examples support the following:
\begin{thm}\label{HC:thm}
For a $\GL_n(\Fp)$-projective module $P$,
the $\GL_{n-1}(\Fp)$-projective $\delta_n(P)$ 
is isomorphic to the Harish-Chandra restriction of $P$
for the Levi subgroup $\GL_{n-1}(\Fp)\times\GL_1(\Fp)$,
restricted to $\GL_{n-1}(\Fp)$.
\end{thm}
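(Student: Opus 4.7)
The plan is to use Lusztig's theorem (cited in the remark after Theorem \ref{main2}) to reduce the statement to projectives of the form $\St_n\otimes X$, check the identification on such modules, and extend by a Grothendieck group argument.

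Let $U$ be the unipotent radical of the standard parabolic of $\GL_n(\Fp)$ with Levi $L=\GL_{n-1}(\Fp)\times\GL_1(\Fp)$, consisting of matrices $\begin{pmatrix}I_{n-1}&v\\0&1\end{pmatrix}$, so $|U|=p^{n-1}$ and Harish-Chandra restriction is $V\mapsto V^U$. Since $\St_n$ is projective over the Borel and $U$ lies in the Borel, $\St_n$ is $\Fp[U]$-free. Hence, for any $X\in\GL_n\mod$, the diagonal $U$-action on $\St_n\otimes X$ is free, and the standard untwisting ``projection formula'' provides a natural $L$-equivariant isomorphism
\[
(\St_n\otimes X)^U\;\cong\;\St_n^U\otimes\Res_L^{\GL_n}X.
\]
The factor $\St_n^U$ is a $\GL_{n-1}$-projective of dimension $p^{\binom{n}{2}}/p^{n-1}=p^{\binom{n-1}{2}}$, and by classical Steinberg theory (uniqueness of the simple projective $\St_{n-1}$, combined with divisibility of every projective dimension by the Sylow order $p^{\binom{n-1}{2}}$) one obtains $\St_n^U|_{\GL_{n-1}}\cong\St_{n-1}$. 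Restricting the displayed isomorphism to $\GL_{n-1}$ therefore yields $(\St_n\otimes X)^U|_{\GL_{n-1}}\cong\St_{n-1}\otimes\Res_{\GL_{n-1}}^{\GL_n}X$, which by Theorem \ref{main2} is precisely $\delta_n(\St_n\otimes X)$.

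To extend to all projectives, note that both $\delta_n$ and the composition of Harish-Chandra restriction with restriction to $\GL_{n-1}$ are exact functors $\GL_n\proj\to\GL_{n-1}\proj$; for Harish-Chandra restriction this uses that any $\GL_n$-projective is $U$-projective, so the norm map $V_U\to V^U$ is an isomorphism, and $V\mapsto V^U$ is exact on $\GL_n\proj$ and preserves projectivity. The two induced group homomorphisms $K_0(\GL_n\proj)\to K_0(\GL_{n-1}\proj)$ agree on the generators $[\St_n\otimes X]$ by the previous paragraph and by Lusztig's theorem, hence agree everywhere. Since Krull--Schmidt makes $K_0(\GL_{n-1}\proj)$ the free abelian group on indecomposable projectives, equality of classes forces isomorphism of modules.

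The main obstacle is the careful $L$-equivariant verification of the projection formula: the untwisting isomorphism naively depends on a choice of $\Fp[U]$-basis of $\St_n$, but becomes canonical once it is phrased through the norm map from $U$-coinvariants to $U$-invariants, which is an isomorphism on any $U$-projective module and natural in both tensor factors. A secondary, minor point is passing through virtual modules in order to apply Lusztig's theorem, but this is automatic from the exactness of both functors on $\GL_n\proj$.
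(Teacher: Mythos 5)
Your overall architecture --- identify $\delta_n$ and Harish--Chandra restriction on the modules $\St_n\otimes X$ via Theorem \ref{main2}, then spread the identity to all projectives using Lusztig's theorem, exactness, and Krull--Schmidt in $K_0$ --- is sound, and it is essentially the alternative route sketched in the paper's closing remark (which cites \cite[Corollary 7.4]{DM91} for the Steinberg step); the paper's actual proof of Theorem \ref{HC:thm} is different and more direct, see below. The genuine gap is in your justification of the key step, the ``projection formula'' $(\St_n\otimes X)^U\cong\St_n^U\otimes\Res_L^{\GL_n}X$ as $L$-modules. Freeness of $\St_n$ over $\Fp[U]$ plus naturality of the norm map is not enough: the norm map only identifies $(\St_n\otimes X)_U$ with $(\St_n\otimes X)^U$, while the untwisting that produces the tensor factorization is merely $U$-equivariant, and no natural $L$-equivariant isomorphism exists at that level of generality. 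Concretely, take $P=U\times L$ with $U=\langle u\rangle\cong\Z/p$, $L=\langle\ell\rangle\cong\Z/p$, let $M=\Fp[U]$ with both $u$ and $\ell$ acting by multiplication by $u$, and let $X=\Fp[U]$ with $u$ acting by multiplication and $\ell$ acting trivially: then $M$ is $\Fp[U]$-free, but $(M\otimes X)^U$ is the regular $\Fp[L]$-module (the invariants $w_k=\sum_i u^i\otimes u^{i+k}$ are permuted cyclically by $\ell$), whereas $M^U\otimes\Res_L X$ is the trivial $L$-module of dimension $p$. So the step, as you justify it, would fail.

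The isomorphism you need is nevertheless true, but for a stronger reason that you must invoke: $\St_n$ is projective over $\Fp[\GL_n]$, hence over $\Fp[LU]$, and since $U$ is a normal $p$-subgroup of $LU$ every projective $\Fp[LU]$-module is induced from a projective $L$-module (the simples of $LU$ are inflated from $L$, and $\Ind_L^{LU}$ of a projective cover is the projective cover of the inflation); thus $\St_n$ restricted to $LU$ is $\Ind_L^{LU}(\St_n^U)$. The honest projection formula $\Ind_L^{LU}(W)\otimes X\cong\Ind_L^{LU}(W\otimes\Res_L X)$ together with the natural $L$-isomorphism $(\Ind_L^{LU}Z)^U\cong Z$ then gives your displayed formula, naturally in $X$; alternatively, since both sides are projective $L$-modules, a Brauer character comparison in the spirit of \cite[Corollary 7.4]{DM91} (adapted to characteristic $p$ as the paper indicates) suffices. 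Your identification $\St_n^U\cong\St_{n-1}$ by dimension count also tacitly uses that $\St_{n-1}$ is the unique projective of dimension equal to the order of a Sylow $p$-subgroup; this is true but deserves an argument or a reference (Harish--Chandra restriction of the Steinberg module is the Steinberg module). Finally, note that the paper's own proof avoids all of this: it combines formula (\ref{deltaM:equation}) with Proposition \ref{induction:prop} and the identity $\Inf_{L}^{LU}\Ind_{\GL_{n-1}}^{L}\cong\Ind_{\GL_{n-1}U}^{LU}\Inf_{\GL_{n-1}}^{\GL_{n-1}U}$, and concludes by taking left adjoints --- no Lusztig, no Steinberg module, no Grothendieck-group detour, and it applies directly to honest projectives.
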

We first learned the result as a consequence of Theorem \ref{main2} by comparing characters.
The proof given in Section \ref{HC:section}
uses the formula for $\delta$ proven in Section 2 (Formula (\ref{deltaM:equation})). 
%
\begin{nota}
As usual, we let $\H^*V$ be the mod $p$ cohomology of the elementary abelian group $V$.
We denote by $H$ the cohomology
$\H^*\Z/p$,
$H\cong \Fp[\beta t] \otimes \Lambda(t)$, $\vert t \vert =1$, 
for $p$ odd,
(resp. $H = \H^*\Z/2 \cong \F2[t]$, for $p=2$),
and by $\overline H$ the reduced cohomology.
Note that $\H^1V$ identify with $V^*$, so that $\mu^*(t)$ identifies with $\mu$.

The functor $\T \colon \U \ra \U$ is left adjoint to the tensor product by $H$. 

We let $\J (V)$ be the quotient of the vector space of set maps $\Fp^V$ by the sub-space of constant maps,
and we let $\I(V)$ be the augmentation ideal in $\Fp [V^*]$. 

Categories include:
 the category $\GL_n\mod$ of right $\Fp[\GL_n(\Fp)]$-modules and its full subcategory $\GL_n\proj$ of projective $\Fp[\GL_n(\Fp)]$-modules;
 the category $\M_n\mod$ of right $\Fp[\M_n(\Fp)]$-modules and its full subcategory $\M_n\proj$ of projective $\Fp[\GL_n(\Fp)]$-modules;
the category   $\U$ of unstable modules,
and its full subcategory  $\mathcal I \U_n^{red}$  of injective unstable modules which are direct sums
of indecomposable factors of $H^*V_n$.
\end{nota}
%
\section{\textit{H}-module structure, proof of \ref{delta} to \ref{functorEnd}}
\label{H-mod:section}

\subsection{Proof of Proposition \ref{main1}}\label{proof of main1:subsection}
Let $V$ be an elementary abelian $p$-group.
The computation of $\T \H^*(V)$ as an unstable $\End (V)$-module is due to Lannes \cite{Lan92} (see also \cite[3.9.1]{Sch94}):
\[
\T \H^*(V)\cong \H^*(V)^V\cong \Hom_{\Fp}(\Fp[V],\H^*(V))\cong \Fp^V\otimes\H^*(V)
\,.
\]
The right $\End (V)$-action on the right-hand side is given by: 
\[
(\phi \otimes  x).\varphi = (\phi \circ \varphi) \otimes \varphi^*(x)
\,.
\]
For the reduced version:
\begin{thm}\label{barT} 
Let $\J (V)$ be the quotient of the vector space of set maps $\Fp^V$ by the sub-space of constant maps.
There is a $\End(V)$-equivariant isomorphism of unstable modules:
\[
\overline{\T}   \H^*(V) \cong 
\J (V)\otimes \H^*V
\]
\end{thm}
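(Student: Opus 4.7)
The plan is to derive this reduced computation from the unreduced one above, by identifying the canonical $\H^*V$-summand of $\T\H^*V$ inside $\Fp^V \otimes \H^*V$ with the sub-module of constants.

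First I observe that the splitting $H = \Fp \cdot 1 \oplus \overline H$ in the category of unstable modules induces a splitting of the right-adjoint functor $(-) \otimes H$ as $\Id \oplus ((-) \otimes \overline H)$, and hence, by passing to left adjoints, the natural splitting $\T \cong \Id \oplus \overline{\T}$ mentioned in the introduction. Applied to $\H^*V$, this produces a canonical $\End(V)$-equivariant decomposition
\[
\T \H^*V \cong \H^*V \oplus \overline{\T}\H^*V .
\]
On the target side of Lannes' isomorphism, the constants $\Fp \cdot 1 \subset \Fp^V$ form an $\End(V)$-stable subspace (for every $\varphi \in \End(V)$ one has $1 \circ \varphi = 1$), whose quotient is $\J(V)$ by definition. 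Combining with Lannes' iso gives an $\End(V)$-equivariant short exact sequence
\[
0 \to \Fp \cdot 1 \otimes \H^*V \to \T\H^*V \to \J(V) \otimes \H^*V \to 0 ,
\]
and the heart of the proof is to match this with the canonical splitting.

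To do the matching, I would track the structural inclusion $\iota \colon \H^*V \to \T\H^*V$ of the $\Id$-summand. Since $\iota$ is $\End(V)$-equivariant, it sends the unit $1 \in \H^0V = \Fp$ to an $\End(V)$-invariant element of $(\Fp^V \otimes \H^*V)^0 = \Fp^V$; the only such invariants are the constants, as one sees by applying the zero endomorphism, which sends any $f$ to the constant function $f(0)$. Hence $\iota(1)$ is a nonzero multiple of $1 \otimes 1$ (nonzero because $\iota$ splits a projection). Dually, the projection $\T\H^*V \to \H^*V$ is identified with the evaluation $f \otimes x \mapsto f(0)\, x$, whose kernel is visibly isomorphic to $\J(V) \otimes \H^*V$. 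This yields the desired isomorphism $\overline{\T}\H^*V \cong \J(V) \otimes \H^*V$, $\End(V)$-equivariantly by construction.

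The main obstacle, such as it is, is the bookkeeping identifying $\iota$ with the constants inclusion at all degrees, not only on the unit class. This can be done either by unwinding the explicit construction of Lannes' isomorphism from the adjunction counit as recalled in \cite{Sch94}, or by combining the degree-$0$ computation above with the uniqueness of a direct summand of $\Fp^V \otimes \H^*V$ isomorphic to $\H^*V$ on which $\End(V)$ acts through its natural action on the second factor.
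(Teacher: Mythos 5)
Your overall strategy---split off the $\Id$-summand of $\T\H^*(V)$ inside Lannes' unreduced computation and identify it with the constants---is the standard derivation and matches how the statement is obtained in the sources the paper quotes (\cite{Lan92}, \cite[3.9.1]{Sch94}); the paper gives no independent proof. The gap is exactly at the step you yourself flag as ``the main obstacle'': of the two justifications you offer for identifying the structural inclusion $\iota\colon\H^*V\to\T\H^*V$ with the constants inclusion, the first (unwinding the construction of Lannes' isomorphism) is the correct one but is not carried out, and the second (uniqueness of the summand) is false. For $p=2$ and $V$ of rank one, let $\delta\in\Fp^V$ be the function with $\delta(0)=0$, $\delta(1)=1$, let $\epsilon$ be the augmentation of $\H^*V$, and define $g\colon\H^*V\to\Fp^V\otimes\H^*V$ by $g(x)=\mathbf{1}\otimes x+\delta\otimes\big(x-\epsilon(x)\big)$. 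This is a map of unstable modules, it is $\End(V)$-equivariant for the action $(\phi\otimes x).\varphi=(\phi\circ\varphi)\otimes\varphi^*(x)$ (for $\varphi=0$ both sides reduce to $\mathbf{1}\otimes\epsilon(x)$, since the zero endomorphism kills $\delta$ and positive-degree classes simultaneously), and it is split by the equivariant map $\phi\otimes x\mapsto\phi(0)\,x$. Hence its image is an $\End(V)$-equivariant direct summand of $\Fp^V\otimes\H^*V$ isomorphic to $\H^*V$ with its natural action, yet different from $\Fp\mathbf{1}\otimes\H^*V$. So the degree-zero invariance computation plus ``uniqueness'' cannot pin down $\iota$ beyond degree zero; likewise your parenthetical claim that the canonical projection $\T\H^*V\to\H^*V$ ``is identified with $f\otimes x\mapsto f(0)x$'' is precisely the fact to be proved, not something your argument establishes.

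The missing content is the identification, under Lannes' isomorphism $\T\H^*V\cong\Fp^V\otimes\H^*V\cong\bigoplus_{\rho\in\Hom(\Z/p,V)}\H^*V$, of the summand indexed by the trivial homomorphism with the $\Id$-summand of $\T\cong\Id\oplus\overline{\T}$; equivalently, that the projection $\T\H^*V\to\H^*V$ adjoint to $x\mapsto x\otimes 1$ corresponds to $\phi\otimes x\mapsto\phi(0)\,x$. This is exactly what \cite[3.9.1]{Sch94} records, and once it is cited or proved your argument closes immediately: the kernel of that projection is $\{\phi\mid\phi(0)=0\}\otimes\H^*V\cong\J(V)\otimes\H^*V$, equivariantly, since $\varphi(0)=0$ for all $\varphi\in\End(V)$. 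As written, however, the only self-contained justification you give for the key identification is the invalid uniqueness argument, so the proof has a genuine gap at its central step.
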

This description of $\overline{\T}   \H^*(V)$ allows an easy access to the values of the exact functor $\overline{\T}  $ on $\U$-injectives. 
Indeed, if $P$ is a projective $\Fp[\End(V)]$-module:
\begin{equation}\label{TL}
\overline{\T}   (L_P)
	=\Hom_{\End(V)}(P,\overline{\T}    ( \H^*V) )
	=\Hom_{\End(V)}(P,\J (V)\otimes \H^*V )
,
\end{equation}
and if $P$ is a projective $\Fp[\GL(V)]$-module:
\begin{equation}\label{TM}
\overline{\T}   (M_P)
	=\Hom_{\GL(V)}(P,\overline{\T}    ( \H^*V) )
	=\Hom_{\GL(V)}(P,\J (V)\otimes \H^*V )
.
\end{equation}
Though the $\GL(V)$-structure on $\overline{\T}    ( \H^*V) $ is essential here, this formula does not use its full complexity.
One gains flexibility by using the following facts from representation theory:
\begin{enumerate}
\item two $\GL(V)$-projectives with isomorphic Jordan-H\"{o}lder subquotients are isomorphic \cite[\S 16.1, Corollaire 2 du Th\' eor\`eme 35]{Serre77};
\item tensoring with a $\GL(V)$-projective yields a $\GL(V)$-projective;
\item tensoring with a finite-dimensional $\GL(V)$-module $M$ is adjoint to tensoring with its contragredient $M^\#$.
\end{enumerate}
As a result:
\[
\Hom_{\GL(V)}(P,M\otimes \H^*V )\cong \Hom_{\GL(V)}(P\otimes M^\#,\H^*V )
= M_{P\otimes M^\#}
\]
only depends on the Jordan-H\"{o}lder subquotients of $M$
(this will be used also in the last section). 
Moreover in the above formula (\ref{TM}), one can replace $\J(V)$ by its contragredient, that is the augmentation ideal $\I(V)$, 
or its associated graded $\Gr(V)$. 
Indeed, when $p=2$, $\Gr(V)$ is the sum of the exterior powers $\Lambda^kV^*$, $k>0$; when $p>2$ it is the sum of the  
reduced symmetric powers $S^k(V^*)/(\mu^p)$, quotient of the symmetric powers
by the ideal generated by $p$-th powers. 
In both cases, these are given by semi-simple functors on $V$,
and as these are self-dual, so is $\Gr(V)$.
Hence there are isomorphisms of unstable modules:
\begin{equation}
 \Hom_{\GL(V)}(P, \J(V) \otimes \H^*V) \cong  
 \Hom_{\GL(V)}(P,  \I(V) \otimes \H^*V).
\end{equation}
However the isomorphisms are (in general) not natural in $P$.
This proves the $\GL$ version of Proposition \ref{main1}:
\begin{prop}
For any $P$ in $\GL_n\proj$, 
there is an isomorphism of unstable modules: 
\begin{equation*}
 \overline{\T} (M_P) \cong \Hom_{\GL_n)}(P,  \I(V_n) \otimes \H^*V_n)
.\end{equation*}
\end{prop}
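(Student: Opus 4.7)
The plan is to consolidate the chain of identifications laid out in the preceding discussion, essentially packaging the ``$\GL$ version'' argument the text sketches. First I would invoke the $\GL(V)$-equivariant form of Lannes' calculation (Theorem \ref{barT}) together with formula (\ref{TM}) to get the starting point
\[
\overline{\T}(M_P) \cong \Hom_{\GL(V)}(P, \J(V) \otimes \H^*V).
\]

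Next I would apply the tensor-hom adjunction (fact (3) in the list preceding the proposition): for any finite-dimensional $\GL(V)$-module $M$,
\[
\Hom_{\GL(V)}(P, M \otimes \H^*V) \cong \Hom_{\GL(V)}(P \otimes M^\#, \H^*V) = M_{P \otimes M^\#}.
\]
Because $P$ is $\GL(V)$-projective, tensoring with the finite-dimensional module $M^\#$ preserves projectivity by fact (2), so $P \otimes M^\#$ still belongs to $\GL_n\proj$. By fact (1), its isomorphism class, and hence the unstable module $M_{P \otimes M^\#}$, depends only on the multiset of Jordan--H\"older constituents of $P \otimes M^\#$, which with $P$ fixed depends only on those of $M$.

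I would then apply this observation to $M = \J(V_n)$ on the one hand and $M = \I(V_n)$ on the other, reducing the proposition to the claim that $\J(V)^\#$ and $\I(V)^\#$ have the same Jordan--H\"older content, equivalently that $\J(V)$ and $\I(V)$ do. Each of these carries a canonical filtration (by degree of a set map, respectively by powers of the augmentation ideal) whose associated graded is $\Gr(V)$: the sum of exterior powers $\Lambda^k V^*$ for $p = 2$, and of reduced symmetric powers $S^k(V^*)/(\mu^p)$ for $p > 2$. These functors of $V$ are semi-simple and self-dual, so the Jordan--H\"older content of $\J(V)$, of $\I(V)$, and of $\Gr(V)$ all coincide, finishing the argument.

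The main subtlety, already flagged in the excerpt, is that this produces an isomorphism $P \otimes \J(V)^\# \cong P \otimes \I(V)^\#$ \emph{for each} $P$, obtained by matching projective covers of matching simple subquotients, but not a natural transformation; there is in general no $\GL(V)$-equivariant map between $\J(V)$ and $\I(V)$ realizing it. So the resulting isomorphism $\overline{\T}(M_P) \cong \Hom_{\GL_n}(P, \I(V_n) \otimes \H^*V_n)$ is not claimed to be natural in $P$, and this lack of naturality is the feature one must track carefully when subsequently using the $H$-module structure visible only on the right-hand side.
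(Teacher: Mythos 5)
Your proposal is correct and follows essentially the same route as the paper: starting from the identification $\overline{\T}(M_P)\cong\Hom_{\GL(V)}(P,\J(V)\otimes\H^*V)$, using the adjunction $\Hom_{\GL(V)}(P,M\otimes\H^*V)\cong M_{P\otimes M^\#}$ together with the facts that tensoring preserves projectivity and that projectives are determined by their Jordan--H\"older content, and then observing that $\J(V)$ and $\I(V)$ share the semi-simple, self-dual associated graded $\Gr(V)$. You also correctly flag, as the paper does, that the resulting isomorphism is not natural in $P$.
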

\medskip
Even though we cannot use the contragredient, 
we now show that one still can argue similarly for the $\End(V)$ case.

Consider a short exact sequence of $\End(V)$-modules:
$
0\ra K \ra E \ra Q \ra 0
.$
Using projectivity of  $P$ and injectivity of the unstable modules, one gets a splitting:
\[
 \Hom_{\End (V) } (P, E\otimes \H^*V) 
 \cong  \Hom_{\End (V) } (P, K \otimes \H^*V) \oplus \Hom_{\End (V) } (P, Q \otimes \H^*V).
\]
Thus:
\[
 \Hom_{\End (V) } (P, \I(V) \otimes \H^*V) 
 \cong  \Hom (P, \Gr(V) \otimes \H^*V)
 \cong  \Hom_{\End (V) } (P,  \J(V) \otimes \H^*V)
\]
where $\Gr(V)$ is the direct sum of the $\End(V)$-composition factors, which is easily seen to be  the same as above.

The direct sum decomposition depends on choices and, again, the isomorphisms are not natural in $P$.

This concludes the proof of Proposition \ref{main1}.
\subsection{The \textit{H}-module structure }\label{H:subsection}
We now describe the $H$-module structure on $\I (V)\otimes \H^*V$.
\begin{prop} \label{H-module:prop}
The unstable module $\I(V)\otimes \H^*V$
is endowed with a free $H$-module structure defined by:
\[
u.((\mu) \otimes x)= (\mu) \otimes \mu^*(u) x
,\
\mu\in V^*,\ (\mu)=[\mu]-[0] \in \I(V),\ 
u \in H,\ x\in\H^*(V)
.\]
The $H$-action is $\End(V)$-equivariant.
Moreover 
the quotient 
$
\Fp\otimes_H ( \I(V)\otimes \H^*V )
$
is a reduced unstable injective module.
\end{prop}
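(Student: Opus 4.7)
The strategy is to unpack the displayed formula on the canonical basis $\{(\mu) : \mu \in V^* \setminus \{0\}\}$ of $\I(V)$ and to reduce each assertion to properties of the individual ring maps $\mu^* : H \to \H^*V$. The key observation is that for $\mu \neq 0$ the linear form $\mu : V \twoheadrightarrow \Z/p$ is surjective, so a splitting of abelian groups $V \cong \ker\mu \oplus \Z/p$ yields $\mu^*$ as the inclusion of $H$ into the unstable algebra $\H^*V \cong \H^*(\ker\mu) \otimes H$; in particular $\mu^*$ makes $\H^*V$ a free $H$-module. Extending the displayed formula by $\Fp$-linearity, associativity $u.(v.m)=(uv).m$ and the unit axiom are immediate from $\mu^*$ being a ring map. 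The Cartan formula needed for an unstable $H$-module structure follows from the Cartan formula in $\H^*V$ together with $\mu^*$ being a map of unstable algebras: since $\Ap$ acts trivially on the $\I(V)$ factor,
\[
\Sq^k\bigl(u \cdot ((\mu) \otimes x)\bigr) = (\mu) \otimes \sum_{i+j=k} \mu^*(\Sq^i u) \cdot \Sq^{j}(x) = \sum_{i+j=k} \Sq^i(u) \cdot \Sq^{j}\bigl((\mu) \otimes x\bigr),
\]
with the analogous computation for $p$ odd involving $P^i$ and $\beta$. The $\End(V)$-equivariance reduces to the contravariant naturality $\varphi^* \circ \mu^* = (\mu \circ \varphi)^*$: a one-line check using the action formula $((\mu) \otimes x) \cdot \varphi = (\mu \circ \varphi) \otimes \varphi^*(x)$ from the previous subsection gives $u \cdot \bigl(((\mu) \otimes x) \cdot \varphi\bigr) = \bigl(u \cdot ((\mu) \otimes x)\bigr) \cdot \varphi$.

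For freeness, I decompose as $H$-modules
\[
\I(V) \otimes \H^*V \;=\; \bigoplus_{\mu \in V^* \setminus \{0\}} (\mu) \otimes \H^*V,
\]
which is compatible with the $H$-action since the defining formula preserves the $\I(V)$-summand. For each $\mu \neq 0$, the algebra isomorphism $\H^*V \cong \H^*(\ker\mu) \otimes H$ above displays $(\mu) \otimes \H^*V$ as the free unstable $H$-module on $(\mu) \otimes \H^*(\ker\mu)$. Summing over $\mu$ exhibits $\I(V) \otimes \H^*V$ as a free unstable $H$-module.

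Finally, applying $\Fp \otimes_H (-)$ to the decomposition yields the isomorphism of unstable modules
\[
\Fp \otimes_H \bigl(\I(V) \otimes \H^*V\bigr) \;\cong\; \bigoplus_{\mu \in V^* \setminus \{0\}} \H^*(\ker\mu),
\]
a direct sum of $p^n - 1$ copies of $\H^*V_{n-1}$ (for $n = \dim V$), hence a reduced unstable injective module. The main subtlety is in the freeness step: the splittings chosen are not natural in $\mu$, so one must verify that they still assemble into a well-defined isomorphism of unstable $H$-modules. This works because the $H$-action respects the $\I(V)$-summand decomposition, allowing each $(\mu) \otimes \H^*V$ to be treated independently, while the intrinsic $\H^*V$-module structure via $\mu^*$ makes the resulting quotient $\Fp \otimes_H (-)$ visibly independent of the choices.
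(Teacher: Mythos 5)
Your proof is correct and follows essentially the same route as the paper: the $H$-structure on each summand $(\mu)\otimes \H^*V$ comes from the short exact sequence $0\to\Ker\mu\to V\to\Fp\to 0$, making it free with quotient $\H^*(\Ker\mu)$, so that $\Fp\otimes_H(\I(V)\otimes\H^*V)\cong\bigoplus_{\mu\neq 0}\H^*(\Ker\mu)$ is reduced and injective, and equivariance reduces to $\varphi^*\circ\mu^*=(\mu\circ\varphi)^*$. You merely spell out the Cartan-formula and associativity checks that the paper leaves implicit.
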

\begin{proof}
For each $\mu$ in $V^*$, the $H$-structure on $\H^*(V)$ is induced by the short exact sequence
\[
0\to\Ker\mu\subset V\rightarrow\Fp\to 0.
\]
Hence, the $H$-structure is free and
 $\Fp\otimes_H \H^*(V)\cong \H^*(\Ker\mu)$.
Assembling these gives an 
isomorphism of unstable modules
\begin{equation}\label{formula:equation}
\Fp\otimes_H (\I(V)\otimes \H^*V)\to \bigoplus_{\mu \in V^*\setminus\{0\}}\H^*(\Ker \mu)
\end{equation}
sending $1\otimes((\mu)\otimes x)$ to $\mu^*(x)$ in the summand $\H^*(\Ker \mu)$.
The $\End(V)$-equivariance results from:
 \begin{align*}
\big(u. \left( \left(\mu\right) \otimes x \right) \big) .\varphi & =
((\mu) \otimes \mu^*(u) x). \varphi =
 (\mu \circ \varphi) \otimes \varphi^*(  \mu^*(u) x)
 \\
 &=
 (\mu \circ \varphi) \otimes  \varphi^*[ \mu^*(u)] \varphi^*(x)=
 (\mu \circ \varphi) \otimes   (\mu \circ \varphi)^*(u) \varphi^*(x)
\\&=
 u.\big( \left( (\mu) \otimes x \right) . \varphi \big)
 \end{align*}
The proposition is proved. 
\end{proof}
The quotient $\bigoplus_{\mu \in V^*\setminus\{0\}}\H^*(\Ker \mu)$
inherits a right $\End(V)$ action. Explicitly, 
for $\varphi$ in $\End(V)$ and $x$ in a summand $\H^*(\Ker \mu)$,
$x.\varphi=0$ if $\mu\circ\varphi=0$, 
and $x.\varphi=\varphi^*(x)$ in the summand $\H^*(\Ker (\mu\circ\varphi))$ if $\mu\circ\varphi\neq 0$.
\begin{cor}
For $P$ in $\End(V)\proj$ (resp. in $\GL(V)\proj$),
there is a natural free $H$-module structure on
$\overline{\T}   (L_P)$
and an isomorphism of unstable modules:
\begin{eqnarray}
\Fp\otimes_H \overline{\T}   (L_P) &\cong \Hom_{\End (V) } (P, \bigoplus_{\mu \in V^*\setminus\{0\}}\H^*(\Ker \mu))
\\
\textrm{ (resp. }
\Fp\otimes_H \overline{\T}   (M_P) &\cong \Hom_{\GL (V) } (P, \bigoplus_{\mu \in V^*\setminus\{0\}}\H^*(\Ker \mu))\, )
\end{eqnarray}
These unstable modules are $\mathcal Nil$-closed.
\end{cor}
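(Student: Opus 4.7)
The plan is to combine the two preceding propositions of this section. By Proposition \ref{main1}, the natural isomorphism $\overline{\T}(L_P)\cong \Hom_{\End(V)}(P,\I(V)\otimes \H^*V)$ transports the $H$-module structure on $\I(V)\otimes \H^*V$ provided by Proposition \ref{H-module:prop} to a natural $H$-module structure on $\overline{\T}(L_P)$: for $u\in H$ and $f\in\Hom_{\End(V)}(P,\I(V)\otimes \H^*V)$, one sets $(u\cdot f)(p):=u\cdot f(p)$. This is well-defined precisely because the $H$-action on $\I(V)\otimes \H^*V$ is $\End(V)$-equivariant, and the Cartan formula on the Hom space is inherited from that on $\I(V)\otimes \H^*V$. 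Freeness of this $H$-module is already recorded as Lemma \ref{H-mod}, since the module in question is, by definition, $\nu_n(P)$.

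To identify $\Fp\otimes_H \overline{\T}(L_P)$, I would consider the canonical $H$-linear map
\[
\Fp\otimes_H\Hom_{\End(V)}(P,\I(V)\otimes \H^*V)\longrightarrow\Hom_{\End(V)}(P,\Fp\otimes_H(\I(V)\otimes \H^*V))
\]
induced by the quotient $\I(V)\otimes \H^*V\to \Fp\otimes_H(\I(V)\otimes \H^*V)$. For $P=\Fp[\End(V)]$, evaluation at $1$ identifies both sides with $\Fp\otimes_H(\I(V)\otimes \H^*V)$; since both functors $\Hom_{\End(V)}(-,\I(V)\otimes \H^*V)$ and $\Fp\otimes_H(-)$ preserve finite direct sums and direct summands, the map is an isomorphism for every finitely generated projective $\End(V)$-module. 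Combined with the isomorphism (\ref{formula:equation}) from Proposition \ref{H-module:prop}, this yields the claimed formula; the $\GL(V)$ variant follows identically from the $\GL$-version of Proposition \ref{main1} established in Section \ref{proof of main1:subsection}.

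Finally, for $\Nil$-closedness: each $\H^*(\Ker\mu)$ is a reduced $\U$-injective and hence $\Nil$-closed, and the finite direct sum retains this property. The value of $\Hom_{\End(V)}(P,-)$ on this sum is a direct summand of finitely many copies of $\bigoplus_\mu\H^*(\Ker\mu)$ (since $P$ is a summand of some $\Fp[\End(V)]^{\oplus k}$), and is therefore $\Nil$-closed as well. The main technical obstacle is the commutation of the right-exact functor $\Fp\otimes_H(-)$ with $\Hom_{\End(V)}(P,-)$: a priori one cannot move $\Fp\otimes_H(-)$ through a Hom, and the argument relies crucially on the projectivity of $P$ to reduce to the tautological case $P=\Fp[\End(V)]$ and then extend by additivity.
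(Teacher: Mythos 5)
Your proposal is correct and is essentially the argument the paper leaves implicit: you transport the $\End(V)$-equivariant free $H$-structure of Proposition \ref{H-module:prop} through the isomorphism of Proposition \ref{main1}, commute $\Fp\otimes_H(-)$ past $\Hom_{\End(V)}(P,-)$ by reduction to the free rank-one case and additivity, and then invoke formula (\ref{formula:equation}) and standard $\Nil$-closedness of summands of the $\H^*(\Ker\mu)$'s. The only caveat is your restriction to finitely generated projectives, whereas the paper's $\End(V)\proj$ allows arbitrary direct sums of indecomposables; this is harmless, since $\Hom_{\End(V)}(-,N)$ turns such sums into products and $\Fp\otimes_H(-)$ commutes with these products degreewise because $H$ is finite-dimensional in each degree.
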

%
\subsection{Proof of \ref{delta}, \ref{H-mod}, and \ref{functorEnd}} 
We now use D. Bourguiba's work \cite{Bou09} and prove Theorem \ref{delta} to conclude:
\[
\overline{\T}   (L_P) \cong H \otimes \delta (L_P) \ \  \textrm{ (resp. }\overline{\T} (M_P) \cong H \otimes \delta (M_P) \textrm{\,),}
\]
for projectives $P$.
\begin{thm}(\cite[Theorem 3.2.1]{Bou09}) 
Let $E$  be an unstable $H-\Ap$-module which is free as an $H$-module and let $\mathcal E(E)$ be its injective hull (in the category $H-\U$).
We suppose that $\Fp \otimes _H E$ is reduced and let $I$ be its injective hull in the category $\U$.
Then $\mathcal E (E)$ is isomorphic, as an unstable $H-\Ap$-module, to $H \otimes I$.
\end{thm}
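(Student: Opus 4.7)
The plan is to verify the three defining properties of an injective hull for the pair $(E, H\otimes I)$ in $H$-$\U$: the existence of an embedding of $E$, the injectivity of $H\otimes I$, and the essentiality of the embedding. Uniqueness of the injective hull will then identify $\mathcal E(E)$ with $H\otimes I$ as unstable $H$-$\Ap$-modules.

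First I would build the embedding. Since $E$ is free as an $H$-module, one has a canonical isomorphism $E\cong H\otimes(\Fp\otimes_H E)$ as unstable $H$-$\Ap$-modules (the $H$-action on the right by multiplication on the first factor, the $\Ap$-action via the Cartan formula). Tensoring the $\U$-injective hull inclusion $\Fp\otimes_H E\hookrightarrow I$ with $H$ and composing with this isomorphism produces the desired $H$-$\Ap$-embedding $E\hookrightarrow H\otimes I$.

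Next I would address the injectivity of $H\otimes I$ in $H$-$\U$. The adjunction $\Hom_{H\text{-}\U}(H\otimes M, N)\cong\Hom_\U(M,N)$ coming from the forgetful functor $H$-$\U\to\U$ does not suffice directly, because it makes $H\otimes-$ a left adjoint rather than a right adjoint. I would instead combine the classical fact that $H$ is itself a reduced injective of $\U$ with a Lannes--Zarati type statement, namely that tensor products of reduced $\U$-injectives remain $\U$-injective, and then enhance this to an injectivity statement in $H$-$\U$ by exploiting that one of the factors carries a free $H$-module structure. The reducedness hypothesis on $\Fp\otimes_H E$, which propagates to $I$, is precisely what rules out parasitic suspensions that would otherwise obstruct this enhancement.

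The heart of the argument, and what I expect to be the main obstacle, is essentiality of $E\hookrightarrow H\otimes I$ in $H$-$\U$. Given a non-zero sub-$H$-$\Ap$-module $N\subseteq H\otimes I$, I would apply $\Fp\otimes_H-$ to obtain a map into $\Fp\otimes_H(H\otimes I)\cong I$; essentiality in $\U$ of the inclusion $\Fp\otimes_H E\hookrightarrow I$ then forces a non-zero intersection with $\Fp\otimes_H E$, which lifts, via the $H$-free structures, to a non-zero intersection $N\cap E$ inside $H\otimes I$. The delicate step is confirming that $\Fp\otimes_H N$ is itself non-zero; this is a graded Nakayama-type statement about submodules of the free $H$-module $H\otimes I$, and it should follow from the instability of $N$ together with the positive grading of $\overline H$, so that an $H$-submodule cannot be entirely concentrated in $\overline H$-multiples.
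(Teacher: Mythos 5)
Your argument breaks down at its very first step, and unfortunately everything else rests on it. You claim that $H$-freeness alone gives a canonical isomorphism $E\cong H\otimes(\Fp\otimes_H E)$ of unstable $H$-$\Ap$-modules. Freeness does give such an isomorphism of \emph{$H$-modules} (lift a graded basis of $\Fp\otimes_H E$), but there is no reason the $\Ap$-action on $E$ should be the Cartan-formula action on the tensor product, and in general it is not. For a concrete counterexample at $p=2$, take $H=\F2[t]$ and let $E=(t,u)\subset\F2[t,u]=\H^*(\Z/2\times\Z/2)$ be the augmentation ideal, with $H$ acting by multiplication by $t$. Then $E$ is $H$-free (basis $t,u,u^2,\dots$ of $\Fp\otimes_H E$), but in degree $1$ every nonzero element $at+bu$ of $E$ has $\Sq^1(at+bu)=at^2+bu^2\neq 0$, whereas $H\otimes(\Fp\otimes_H E)$ contains the nonzero class $1\otimes[t]$ with $\Sq^1(1\otimes[t])=0$; so the two are not isomorphic even as unstable modules. (Here $\Fp\otimes_H E$ is not reduced, which is consistent with Bourguiba's theorem, but your splitting claim was made from freeness alone.) Even under the reducedness hypothesis, asserting that $E$ itself splits as $H\otimes(\Fp\otimes_H E)$ is not a formal consequence of anything you have set up --- it is essentially the content of the theorem: the quoted consequence used in the paper (Bourguiba's Remark 4.3) needs $\Fp\otimes_H E$ to be reduced \emph{and injective} to force $E\cong H\otimes(\Fp\otimes_H E)$, and the theorem in general only produces the splitting for the injective hull, not for $E$. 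So your proposal begs the question: the whole difficulty is to produce any $H$-$\Ap$-linear embedding $E\hookrightarrow H\otimes I$, and your construction of it presupposes the splitting you are trying to prove.

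The later steps are more reasonable in outline but are not where the work lies: injectivity of $H\otimes I$ in $H$-$\U$ for $I$ injective in $\U$ is indeed a known Lannes--Zarati-type fact, and your essentiality argument via a graded-Nakayama statement ($\Fp\otimes_H N\neq 0$ for a nonzero $H$-submodule $N$, using that unstable modules are non-negatively graded and $\overline H$ is positively graded) is plausible. Note also that the paper does not prove this statement at all: it is quoted from Bourguiba \cite[Theorem 3.2.1]{Bou09}, the authors only remarking that the proof there (Lemma 3.1.1, which uses $H$-freeness, and Proposition 2.6.2, which uses torsion-freeness over $\Fp[\beta t]$) adapts to odd primes. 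A correct direct proof would have to engage with that structure theory to build the comparison map; as written, your first step removes exactly that difficulty by assumption.
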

In the reference, the result is proved for the prime $2$, but its proof adapts for odd primes. 
Indeed, Lemma 3.1.1 in \cite{Bou09} only uses that $Q$ is $H$-free, 
and Proposition 2.6.2 only uses that the module is torsion-free over the sub-polynomial algebra $\Fp[\beta t]$ of $H$.

We need the following particular case of the theorem 
\cite[Remark 4.3]{Bou09}:\\
\emph{If $ \overline E = \Fp \otimes _{H} E$  is a reduced injective object in $\U$,
then the only unstable free $H-\Ap$-module $M$, up to $\U$-isomorphism, such that 
$\Fp \otimes _{H} M \cong \overline E$ is $H \otimes \overline E$.}
\\
We thus get an isomorphism of unstable modules:
$\overline{\T}   (L_P)\cong H\otimes \delta(L_P)$.
The unstable module 
\[
\delta(L_P)=\Hom_{\End(V)}\big( P, \bigoplus_{\mu \in V^*\setminus\{0\}}\H^*(\Ker \mu)\big)
\]
is injective, and more precisely
it is a summand in $\bigoplus_{\mu \in V^*\setminus\{0\}}\H^*(\Ker \mu)$. 
A similar formula holds with $\GL(V)$, see Section \ref{HC:section}.

To prove Theorem \ref{functorEnd}, there remains to show that if $P$ is in $\M_n\proj$,
then $\delta (L_P) \cong L_Q$, for $Q$ in $\M_{n-1}\proj$. 
This follows 
from Theorem \ref{HK88}. 
\section{Proof of Proposition \ref{prop}}
For the first part of Proposition \ref{prop}, apply the definitions.
For the second part, 
the functor $\omega$
is induced by the functor $ \Omega$ defined on unstable modules
as left adjoint to suspension.
The details of the argument will be discussed elsewhere. 
%
\section{The functor $\delta_n$ on representations of the General Linear group}
\label{HC:section}
We now concentrate on the $\GL$ case to relate our construction to classical representation theory of the General Linear group.
\subsection{Proof of Theorem \ref{functorGL}}\label{GL:subsection}
Recall from Section \ref{H-mod:section} that for $n\geq 1$ and $P$ in $\GL_n\proj$, 
$\overline{\T} (M_P) \cong H \otimes \delta (M_P)$
for
\begin{equation}\label{deltaM:equation}
\delta(M_P)=\Hom_{\GL_n(\Fp)}\big( P, \bigoplus_{\mu \in {V_n}^*\setminus\{0\}}\H^*(\Ker \mu)\big)
.
\end{equation}
We now show how the $\GL_n(\Fp)$ representation 
$\bigoplus_{\mu \in {V_n}^*\setminus\{0\}}\H^*(\Ker \mu)$
can be induced from the $\GL_{n-1}(\Fp)$ representation $\H^*(V_{n-1})$.

The group $\GL_n=\GL_n(\Fp)$ is the automorphism group of the vector space $V_n=\Fp^n$. Take the canonical basis $(e_1,\dots ,e_n)$, and
choose $V_{n-1}$ to be the hyperplane of last coordinate zero in $V_n$. 
We refer to the subgroup of $\GL_n$ which stabilizes $V_{n-1}$
as the parabolic subgroup.
It is a semi-direct product $LU$ of two subgroups: the Levi subgroup $L:=\GL_{n-1}\times\GL_1$, and the unipotent subgroup $U$ of matrices $g$ in $\GL_n$ acting as the identity on $V_{n-1}$ and such that $g(e_n)-e_n$ is in $V_{n-1}$. 
The subgroup $\GL_{n-1}=\GL_{n-1}(\Fp)$ is the subgroup $\GL_{n-1}\times 1$ of the Levi subgroup. In other words, the group $\GL_{n-1}$ is considered as a subgroup of $\GL_n$ \emph{via} the inclusion $g\mapsto \begin{pmatrix}
g & 0\\ 0 & 1
\end{pmatrix}$.

Recall that inflation is the simplest way to extend a representation from a subgroup $H$ to a semi-direct product $HU$, by just letting the elements of the normal subgroup $U$ act by the identity.
We denote the resulting exact functor by $\Inf_H^{HU}$.
We note that its left adjoint is given by taking coinvariant of the $U$-action.
\begin{prop}\label{induction:prop}
There is an isomorphism of $\Ap[GL_n]$-modules:
\begin{equation}\label{induction:equation}
\bigoplus_{\mu \in {V_n}^*\setminus\{0\}}\H^*(\Ker \mu)
\cong	
\Ind_{GL_{n-1}U}^{GL_n} \Inf_{GL_{n-1}}^{GL_{n-1}U} H^*V_{n-1}  
	.
\end{equation}
\end{prop}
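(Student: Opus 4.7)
The plan is to realize the left-hand side as an induced representation via the orbit-stabilizer theorem, after identifying the correct base point, stabilizer, and stalk.

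First, I would analyze the $\GL_n$-set structure of the indexing set $V_n^\ast\setminus\{0\}$ under the right action $\mu\mapsto\mu\circ\varphi$ recalled just before Proposition \ref{H-module:prop}. The group $\GL_n$ acts transitively on $V_n^\ast\setminus\{0\}$, so choosing the base point $e_n^\ast$ (last coordinate), I identify its stabilizer as exactly the set of $\varphi\in\GL_n$ whose last row is $(0,\dots,0,1)$, that is the subgroup $\GL_{n-1}U$ of matrices of the form $\begin{pmatrix} A & b\\ 0 & 1\end{pmatrix}$ with $A\in\GL_{n-1}$ and $b\in\Fp^{n-1}$. Note that the full Levi $\GL_{n-1}\times\GL_1$ does not stabilize $e_n^\ast$ pointwise, because the $\GL_1$ factor scales it; this explains why the induction is taken from $\GL_{n-1}U$ rather than from the full parabolic. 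The orbit map $\varphi\mapsto e_n^\ast\circ\varphi$ then yields a $\GL_n$-equivariant bijection between $\GL_{n-1}U\backslash\GL_n$ and $V_n^\ast\setminus\{0\}$.

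Next I would identify the stalk at the base point: $\Ker(e_n^\ast)=V_{n-1}$, so the summand there is $\H^\ast V_{n-1}$. Its $\GL_{n-1}U$-module structure is obtained by restricting the rule $x.\varphi=\varphi^\ast(x)$: for $\varphi=\begin{pmatrix} A & b\\ 0 & 1\end{pmatrix}$, the restriction of $\varphi$ to $V_{n-1}$ equals $A$, so the $\GL_{n-1}$-factor acts in the tautological way on $\H^\ast V_{n-1}$ while $U$ acts trivially (since $U$ fixes $V_{n-1}$ pointwise). This is precisely the inflation $\Inf_{\GL_{n-1}}^{\GL_{n-1}U}\H^\ast V_{n-1}$.

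Then I would conclude by the standard description of induced representations: any $\GL_n$-module which decomposes as a direct sum of vector spaces indexed by a transitive $\GL_n$-set, with the $\GL_n$-action permuting the summands via the action on the indexing set and with stalks isomorphic via the orbit maps, is canonically isomorphic to the induction from the stabilizer of a base point of the stalk at that base point. Applied to $\bigoplus_{\mu\neq 0}\H^\ast(\Ker\mu)$ with base point $e_n^\ast$, this yields the desired $\Fp[\GL_n]$-isomorphism. To upgrade to an $\Ap[\GL_n]$-isomorphism, I would observe that the $\Ap$-action is carried along by naturality: on each summand $\H^\ast(\Ker\mu)$ it is the usual Steenrod action, and the identifications $\varphi^\ast\colon\H^\ast(\Ker\mu)\xrightarrow{\cong}\H^\ast(\Ker(\mu\circ\varphi))$ are maps of unstable modules, so the $\GL_n$-action commutes with the $\Ap$-action on the induced module as well.

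The argument is essentially formal once the conventions are set; the main obstacle is bookkeeping: keeping the right action on linear forms straight against the left action on cohomology, and checking that the Levi factor $\GL_1$ really is \emph{not} part of the stabilizer, so that the inducing subgroup is $\GL_{n-1}U$ rather than the full parabolic $\GL_{n-1}\times\GL_1\ltimes U$. This distinction is exactly what will make Theorem \ref{HC:thm} work in the next step, where Harish-Chandra restriction from the Levi to $\GL_{n-1}$ reappears via the coinvariants-by-$U$ left adjoint mentioned just before the proposition.
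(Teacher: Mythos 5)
Your proof is correct and takes essentially the same route as the paper: both realize the direct sum as induced from the stabilizer $\GL_{n-1}U$ of the form $e_n^*$, with stalk the inflation of the tautological $\GL_{n-1}$-action on $\H^*V_{n-1}$. The only difference is that you verify the isomorphism by the standard recognition lemma for modules graded over a transitive $G$-set, whereas the paper writes the explicit map $g\otimes x\mapsto g^*(x)$ via Frobenius adjunction and checks surjectivity (transitivity) plus a dimension count $[\GL_n:\GL_{n-1}U]=p^n-1$.
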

\begin{proof}
Start with the inclusion
\[
\H^*V_{n-1} =\H^*\Ker \mu_0\hookrightarrow \bigoplus_{\mu\not =0} \H^*\Ker\mu 
\]
of $\H^*V_{n-1}$ as the factor indexed by the n-th coordinate form $\mu_0=e^*_n$.
Every element of $U$ fixes $\mu_0$, and is the identity on $V_{n-1}$.
Thus, the inclusion inflates to a $\GL_{n-1}U$-equivariant map:
\[
\Inf_{\GL_{n-1}}^{\GL_{n-1}U}H^*V_{n-1} \hookrightarrow \bigoplus_{\mu\not =0} H^*\Ker\mu 
.\]
Through adjunction between restriction and induction, this map corresponds to a $\GL_n$-equivariant map 
\[
\Fp[\GL_n]\bigotimes_{\Fp[\GL_{n-1}U]} \H^*V_{n-1}\cong 
\Ind_{\GL_{n-1}U}^{\GL_n} \Inf_{\GL_{n-1}}^{\GL_{n-1}U} \H^*V_{n-1} 
\to \bigoplus_{\mu\not =0} \H^*\Ker\mu 
.\]
This map sends $g\bigotimes x$ to $g^*(x)$ in $\H^*\Ker(\mu_0\circ g)$.
As $GL_n$ acts transitively on the set 
$\{ \Ker (\mu ) \mid \mu \in {V_n}^*\setminus\{0\}\}$,
the resulting map is surjective.
Since the vector space on the left-hand side is isomorphic to  $|GL_n|/|\GL_{n-1}U| =p^n-1$ copies of $H^*V_{n-1}$, 
dimensions agree in each degree, and
the map is bijective.	
\end{proof}
It follows from (\ref{deltaM:equation}) and (\ref{induction:equation}) 
that
\[
\delta(M_P)=\Hom_{\GL_n(\Fp)}\big( P, \Ind_{GL_{n-1}U}^{GL_n} \Inf_{GL_{n-1}}^{GL_{n-1}U} H^*V_{n-1}  \big)
\]
Note that induction, as inflation, is exact, and that the left adjoint of an exact functor sends projectives to projectives.
So the unstable module $\delta(M_P)$ is isomorphic to $M_Q$, 
where $Q$ is obtained from $P$ by applying the left adjoint of $\Ind_{\GL_{n-1}U}^{\GL_n} \Inf_{\GL_{n-1}}^{\GL_{n-1}U} $.
Explicitly, 
$Q$ is the $\GL_{n-1}$-projective $(\Res_{\GL_{n-1}U}^{GL_n} P)_U$.
This proves Theorem \ref{functorGL}.
\subsection{Harish-Chandra restriction and proof of Theorem \ref{HC:thm}}
We refer to \cite{DM91}. The presentation there is made over the complex numbers, but it is adequate in any characteristic.

Let us first describe Harish-Chandra induction.
Starting with a
$\GL_{n-1}\times\GL_1$-module,
inflate the action to the parabolic (by letting $U$ act by the identity); 
then apply induction to $\GL_n$.
Note that both steps are exact, so Harish-Chandra induction is exact. 
When $p=2$, the Levi subgroup coincide with $\GL_{n-1}$, and we recover the construction from Proposition \ref{induction:prop}.

Harish-Chandra restriction is defined as the adjoint of Harish-Chandra induction. 
There is a choice of right or left adjoint here,
and for our purpose,
we define Harish-Chandra restriction as the left adjoint.
Explicitly, starting with a $\Fp[\GL_n(\Fp)]$-module $X$, 
the Harish-Chandra restriction of $X$ is obtained by
restriction to the parabolic subgroup,
followed by 
taking coinvariants of the corresponding unipotent $U$:
\[
(\Res_{LU}^{GL_n} X)_U
.\]

To compare with the construction in Proposition \ref{induction:prop},
note that, as $L/\GL_{n-1}\cong LU/\GL_{n-1}U\cong \GL_1$, we have:
\[
\Inf_{L}^{LU} \Ind_{\GL_{n-1}}^{L} 
\cong
\Ind_{\GL_{n-1}U}^{LU} \Inf_{\GL_{n-1}}^{\GL_{n-1}U} 
.\]
Taking adjoints, this proves Theorem \ref{HC:thm}.
\subsection{Proof of Theorem \ref{main2}}
We deduce Theorem \ref{main2} from the known properties of Harish-Chandra restriction.

We want to compare
$\delta_n(\St_n\otimes X)$ with $\St_{n-1}\otimes \Res_{\GL_{n-1}}^{\GL_n}X$.
For this, we can apply the character viewpoint used in \cite{DM91}:
indeed, projectives representations are characterized by their Brauer character.
In particular we want to use \cite[Corollary 7.4]{DM91}, which states exactly what we need on characters to prove Theorem \ref{main2}.
The proof of  \cite[Corollary 7.4]{DM91} is the averaging formula
\[
\vert U\vert^{-1}\sum_{u\in U}f(\ell u)
\] 
and it does not go through in positive characteristic, because of the division by $\vert U\vert$;
however, it can be replaced by the much simpler $f(\ell)$.
\subsection{A direct proof of Theorem \ref{main2} and \ref{HC:thm}}
Our original proof of Theorem \ref{main2} is also based on character comparison.
The key lemma here is the following.
\begin{lem}\label{key}There is an isomorphism in $\GL_n\mod$: 
\[
\Ind_{\GL_{n-1}}^{\GL_n}(\St_{n-1})\cong \J(V_n)^\#\otimes\St_n
.\]
\end{lem}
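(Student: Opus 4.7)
The plan is to realise both sides as inductions from the stabiliser subgroup $\GL_{n-1}U = \mathrm{Stab}_{\GL_n}(e_n)$ of Section \ref{HC:section}, reducing the lemma to an isomorphism between two projective $\GL_{n-1}U$-modules, which will then be settled by comparing $U$-coinvariants.

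First, $\J(V_n)^\#$ identifies with the augmentation ideal of $\Fp[V_n]$, with basis $\{[v]-[0]\mid v\in V_n\setminus\{0\}\}$ permuted by the natural action of $\GL_n$. Since this action is transitive on $V_n\setminus\{0\}$ with point-stabiliser $\GL_{n-1}U$, one obtains a $\GL_n$-isomorphism $\J(V_n)^\# \cong \Ind_{\GL_{n-1}U}^{\GL_n}\Fp$, and the projection formula gives
\[
\J(V_n)^\#\otimes \St_n \;\cong\; \Ind_{\GL_{n-1}U}^{\GL_n}\bigl(\Res_{\GL_{n-1}U}^{\GL_n}\St_n\bigr).
\]
In parallel, induction in stages writes $\Ind_{\GL_{n-1}}^{\GL_n}\St_{n-1} \cong \Ind_{\GL_{n-1}U}^{\GL_n}\bigl(\Ind_{\GL_{n-1}}^{\GL_{n-1}U}\St_{n-1}\bigr)$. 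By functoriality of induction, the lemma reduces to the identity of $\GL_{n-1}U$-modules
\[
\Res_{\GL_{n-1}U}^{\GL_n}\St_n \;\cong\; \Ind_{\GL_{n-1}}^{\GL_{n-1}U}\St_{n-1}.
\]

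Both sides of this reduced identity are projective over $\Fp[\GL_{n-1}U]$, because $\St_n$ and $\St_{n-1}$ are projective and both restriction and induction preserve projectivity. Now $U$ is a normal $p$-subgroup of $\GL_{n-1}\ltimes U$ with quotient $\GL_{n-1}$, so every simple $\Fp[\GL_{n-1}U]$-module is inflated from $\GL_{n-1}$. A standard Frobenius-reciprocity argument then identifies the indecomposable projectives of $\GL_{n-1}U$ with the inductions $\Ind_{\GL_{n-1}}^{\GL_{n-1}U}(Q)$ of indecomposable projectives $Q$ of $\GL_{n-1}$, with the $U$-coinvariant functor as inverse bijection. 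It therefore suffices to match the $U$-coinvariants of both sides as $\GL_{n-1}$-modules. On the right, the identity $(\Ind_{\GL_{n-1}}^{\GL_{n-1}U} N)_U\cong N$ gives $\St_{n-1}$. On the left, the classical fact that the Harish-Chandra restriction of Steinberg is the Steinberg of the Levi gives $(\Res_{LU}^{\GL_n}\St_n)_U \cong \St_L \cong \St_{n-1}\otimes\Fp$, and restricting to the $\GL_{n-1}$-factor of $L=\GL_{n-1}\times\GL_1$ yields $\St_{n-1}$; this agrees with $(\Res_{\GL_{n-1}U}^{\GL_n}\St_n)_U$ because $U$-coinvariants commute with restriction from $LU$ down to $\GL_{n-1}U$.

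The main obstacle is the Harish-Chandra restriction of Steinberg in natural characteristic: classical but deserving care. An alternative bypassing this ingredient is a direct Brauer-character comparison: every $p$-regular element of $\GL_{n-1}U$ lies in $\GL_{n-1}$, the induced-character formula gives $\chi_{\Ind\St_{n-1}}(g) = p^{\dim V_{n-1}^g}\cdot\chi_{\St_{n-1}}(g)$, and Curtis's formula $\chi_{\St_G}(s) = \pm|C_G(s)|_p$ combined with the identity $|C_{\GL_n}(g\oplus 1)|_p = p^{\dim V_{n-1}^g}\cdot|C_{\GL_{n-1}}(g)|_p$ reproduces $\chi_{\St_n}(g\oplus 1)$, with signs agreeing after a short rank computation.
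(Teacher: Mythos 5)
Your argument is correct, but it takes a genuinely different route from the paper. The paper proves the lemma by a global Brauer character computation on $\GL_n$: it evaluates the induced character $\Ind_{\GL_{n-1}}^{\GL_n}\St_{n-1}$ at an arbitrary $p$-regular $s$ and matches it with $\St_n(s)\cdot\J(V_n)^\#(s)$, using Brunat's identity $\St_n(\sigma)=p^{i-1}\St_{n-1}(\sigma)$, the value $\J(V_n)^\#(s)=p^i-1$, and a counting argument, together with the fact that projectives are detected by Brauer characters. You instead categorify the identity: you recognize $\J(V_n)^\#$ as a transitive permutation module, use the projection formula and induction in stages to reduce to an isomorphism of projective modules over the stabilizer subgroup, and then use that projectives over $\GL_{n-1}\ltimes U$ are classified by their $U$-coinvariants, so that everything rests on the single classical input that the coinvariants of $\St_n$ under the unipotent radical give $\St_{n-1}$ (Smith's theorem in the defining characteristic, or your Curtis-formula verification, which is essentially Brunat's fact in disguise). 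Your route avoids the paper's counting step and the evaluation of $\J(V_n)^\#$ at general elements, and it produces an actual module-theoretic mechanism rather than a character coincidence; the paper's route is shorter and needs no structure theory of the stabilizer.

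Two small imprecisions to fix. First, the stabilizer of a nonzero \emph{vector} $e_n$ is the mirabolic $\{\left(\begin{smallmatrix}A&0\\ *&1\end{smallmatrix}\right)\}\cong\GL_{n-1}\ltimes U^-$, not the paper's $\GL_{n-1}U$, which stabilizes the \emph{functional} $e_n^*$; which of the two occurs depends on the duality conventions for $\#$, but your argument is insensitive to this, since both subgroups contain $\GL_{n-1}$, both have an elementary abelian normal $p$-subgroup of order $p^{n-1}$ with quotient $\GL_{n-1}$, and the Steinberg coinvariants statement holds for $U$ and for $U^-$ alike. Second, a $p$-regular element of $\GL_{n-1}U$ need not lie in $\GL_{n-1}$ (e.g.\ $ugu^{-1}$ with $u\in U$ not commuting with $g$), but it is always conjugate in $\GL_{n-1}U$ to an element of $\GL_{n-1}$ by a Schur--Zassenhaus argument inside $\langle g\rangle U$, which is all that the Brauer character comparison requires.
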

\begin{proof}
	

	Fix a $p$-regular element $s\in \GL_n$. 
	One has 
	$$\Ind_{\GL_{n-1}}^{\GL_n}\St_{n-1}(s) =\frac{1}{|\GL_{n-1}|}\sum_{g\in G \atop gsg^{-1}\in \GL_{n-1}}\St_{n-1}(gsg^{-1}).$$
	Here for $R$ an $\Fp[\GL_n(\Fp)]$-module, $R(s)$ denote the evaluation of the Brauer character of $R$ at $s$. 
	Suppose that $\dim_{\Fp}\Ker (s-1)=i$. In order to prove the identity $$\Ind_{\GL_{n-1}}^{\GL_n}\St_{n-1}(s)=\\St_n(s)\times \mathcal J(V_n)^\#(s)$$ 
	one needs the following three facts: 
	\begin{enumerate}
		\item For $g\in \GL_n$ such that $\sigma:=gsg^{-1}\in \GL_{n-1}$,  $\St_n(\sigma)=p^{i-1}\St_{n-1}(\sigma)$, $i\ge 1$. 
		\item $\mathcal J(V_n)^\#(s)=p^i-1$, $i\ge 0$.
		\item $\#\{g\in \GL_n\mid gsg^{-1}\in \GL_{n-1}\}=|\GL_{n-1}|(p^i-1)p^{i-1}$, $i\ge 0$.
	\end{enumerate}
	
The first one \footnote{This fact was used by O. Brunat to show that the characters of $\Res_{\GL_{n-1}}^{\GL_n} \St_n$ 
	and  $\St_{n-1}\otimes \omega_{n-1}$ are the same, where $\omega_{n}$ denotes the Weil representation of $\GL_n(\Fp)$.} is proved by Oliver Brunat in  \cite[Appendix]{HZ09}. 

For the second fact, recall that $\Fp[V_n^*]$ and the truncated symmetric algebra $S^*(V_n^*)/(x^p)$ have the same composition factors, 
and that the Brauer character of $S^*(V_n^*)/(x^p)$ evaluated at a $p$-regular element having $1$ as an eigenvalue of multiplicity $i$ is $p^i$. 
It follows that $\mathcal J(V_n)^\#(s)=p^i-1$.

We leave the details of the counting for the third fact to the reader.
\end{proof}
We now deduce Theorem \ref{main2} by the following sequence of isomorphisms of unstable modules:
\begin{eqnarray*}
\overline \T(M_{\St_n\otimes X})
& \cong &\Hom_{\GL_n}\big(\St_n\otimes X,  \J(V_n) \otimes \H^*V_n\big)\\
&\cong &  \Hom_{\GL_n}\big(\J(V_n)^\#\otimes\St_n,X^\# \otimes \H^*V_n\big)\\
&\cong &  \Hom_{\GL_n}\big(\Ind_{\GL_{n-1}}^{\GL_n}(\St_{n-1}),X^\# \otimes \H^*V_n\big)\\
& \cong & 
\Hom_{\GL_{n-1}}\big(\St_{n-1}, \Res_{\GL_{n-1}}^{\GL_n}(X^\# \otimes \H^*V_{n})\big)\\
&\cong &
\Hom_{\GL_{n-1}}\big(\St_{n-1}, \Res_{\GL_{n-1}}^{\GL_n}(X^\#) \otimes \H^*V_{n-1}\otimes H\big)\\
& \cong & 
\Hom_{\GL_{n-1}}\big (\St_{n-1}, \Res_{\GL_{n-1}}^{\GL_n}(X^\#)\otimes \H^*V_{n-1}\big )\otimes H\\
&\cong &
 \Hom_{\GL_{n-1}}\big(\St_{n-1}\otimes  \Res_{\GL_{n-1}}^{\GL_n}X,\H^*V_{n-1}\big)\otimes H\\
&\cong & M_{\St_{n-1}\otimes  \Res_{\GL_{n-1}}^{\GL_n}X} \ \otimes H.
\end{eqnarray*}
\begin{rmq}
We can deduce Theorem \ref{HC:thm} from Theorem \ref{main2} and \cite[Corollary 7.4]{DM91}. 
Indeed, these two results show that the statement is correct when $P$ is of the form $\St_n\otimes X$.
Since, by Lusztig's result \cite{Lusztig76}, every projective can be written $\St_n\otimes X$,
let us allow virtual $X$,
the statement is valid for every projective.
\end{rmq}

\providecommand{\bysame}{\leavevmode\hbox to3em{\hrulefill}\thinspace}
\providecommand{\MR}{\relax\ifhmode\unskip\space\fi MR }
\providecommand{\MRhref}[2]{%
	\href{http://www.ams.org/mathscinet-getitem?mr=#1}{#2}
}
\providecommand{\href}[2]{#2}


\begin{thebibliography}{Bou09}
	
	\bibitem[Bou09]{Bou09}
	Dorra Bourguiba, \emph{On the classification of unstable {$H\sp
			*V$}-{$A$}-modules}, J. Homotopy Relat. Struct. \textbf{4} (2009), no.~1,
	69--82. \MR{2520987 (2010i:55023)}
	
	\bibitem[CK89]{CK96}
	D.~P. Carlisle and N.~J. Kuhn, \emph{Smash products of summands of {$B({\bf
				Z}/p)\sp n\,\sb +$}}, Algebraic topology ({E}vanston, {IL}, 1988), Contemp.
	Math., vol.~96, Amer. Math. Soc., Providence, RI, 1989, pp.~87--102.
	\MR{1022675 (90i:55017)}
	
	\bibitem[DM91]{DM91}
	Fran{\c{c}}ois Digne and Jean Michel, \emph{Representations of finite groups of
		{L}ie type}, London Mathematical Society Student Texts, vol.~21, Cambridge
	University Press, Cambridge, 1991. \MR{1118841}
	
	\bibitem[Hai15]{Hai15}
	Nguyen Dang~Ho Hai, \emph{On a conjecture of {L}ionel {S}chwartz about the
		eigenvalues of {L}annes' {T}-functor}, C. R. Math. Acad. Sci. Paris
	\textbf{353} (2015), no.~3, 197--202. \MR{3306484}
	
	\bibitem[Hai16]{Hai16}
	\bysame, \emph{A proof of {S}chwartz's conjecture about the eigenvalues of
		{L}annes' {$T$}-functor}, J. Algebra \textbf{445} (2016), 115--124.
	\MR{3418049}
	
	\bibitem[HK88]{HK88}
	John~C. Harris and Nicholas~J. Kuhn, \emph{Stable decompositions of classifying
		spaces of finite abelian {$p$}-groups}, Math. Proc. Cambridge Philos. Soc.
	\textbf{103} (1988), no.~3, 427--449. \MR{MR932667 (89d:55021)}
	
	\bibitem[HS92]{Harris-Shank-92}
	John~C. Harris and R.~James Shank, \emph{Lannes' {$T$} functor on summands of
		{$H\sp *(B({\bf Z}/p)\sp s)$}}, Trans. Amer. Math. Soc. \textbf{333} (1992),
	no.~2, 579--606. \MR{1118825 (92m:55022)}
	
	\bibitem[HZ09]{HZ09}
	G.~Hiss and A.~Zalesski, \emph{The {W}eil-{S}teinberg character of finite
		classical groups}, Represent. Theory \textbf{13} (2009), 427--459, With an
	appendix by Olivier Brunat. \MR{2550472 (2010m:20016)}
	
	\bibitem[Lan92]{Lan92}
	Jean Lannes, \emph{Sur les espaces fonctionnels dont la source est le
		classifiant d'un {$p$}-groupe ab\'elien \'el\'ementaire}, Inst. Hautes
	\'Etudes Sci. Publ. Math. (1992), no.~75, 135--244, With an appendix by
	Michel Zisman. \MR{MR1179079 (93j:55019)}
	
	\bibitem[LS89]{LS89}
	Jean Lannes and Lionel Schwartz, \emph{Sur la structure des {$A$}-modules
		instables injectifs}, Topology \textbf{28} (1989), no.~2, 153--169.
	\MR{MR1003580 (90h:55027)}
	
	\bibitem[Lus76]{Lusztig76}
	G.~Lusztig, \emph{Divisibility of projective modules of finite {C}hevalley
		groups by the {S}teinberg module}, Bull. London Math. Soc. \textbf{8} (1976),
	no.~2, 130--134. \MR{0401900 (53 \#5726)}
	
	\bibitem[LZ95]{LZ95}
	J.~Lannes and S.~Zarati, \emph{Th\'eorie de {S}mith alg\'ebrique et
		classification des {$H^*V$}-{$\mathcal U$}-injectifs}, Bull. Soc. Math.
	France \textbf{123} (1995), no.~2, 189--223. \MR{1340287}
	
	\bibitem[MP83]{MP83}
	Stephen~A. Mitchell and Stewart~B. Priddy, \emph{Stable splittings derived from
		the {S}teinberg module}, Topology \textbf{22} (1983), no.~3, 285--298.
	\MR{MR710102 (85f:55005)}
	
	\bibitem[Sch94]{Sch94}
	Lionel Schwartz, \emph{Unstable modules over the {S}teenrod algebra and
		{S}ullivan's fixed point set conjecture}, Chicago Lectures in Mathematics,
	University of Chicago Press, Chicago, IL, 1994. \MR{MR1282727 (95d:55017)}
	
	\bibitem[Ser77]{Serre77}
	Jean-Pierre Serre, \emph{Linear representations of finite groups},
	Springer-Verlag, New York-Heidelberg, 1977, Translated from the second French
	edition by Leonard L. Scott, Graduate Texts in Mathematics, Vol. 42.
	\MR{0450380 (56 \#8675)}
	
	\bibitem[Ste56]{Ste56}
	Robert Steinberg, \emph{Prime power representations of finite linear groups},
	Canad. J. Math. \textbf{8} (1956), 580--591. \MR{MR0080669 (18,281d)}
	
\end{thebibliography}
\end{document}